\documentclass[reqno,11pt]{amsart}
\usepackage{amsmath,amssymb,amsthm,amsfonts,mathrsfs,times,color}
\usepackage{ulem,latexsym,times,color,epsfig,bm}
\usepackage[colorlinks=true, pdfstartview=FitV, linkcolor=blue,citecolor=blue, urlcolor=blue]{hyperref}
\usepackage[capitalize,nameinlink,noabbrev]{cleveref}
\usepackage{amsmath,amsfonts,mathtools}
\usepackage{amsthm}
\usepackage{graphicx}
\usepackage{color}
\usepackage{comment}
\usepackage{multicol}
\DeclareMathOperator{\sign}{sign}

\newtheorem{thm}{Theorem}[section]

\newtheorem{notation}{Notation}[section]
\newtheorem{lemma}[thm]{Lemma}
\newtheorem{remark}{Remark}[section]
\newtheorem{theorem}[thm]{Theorem}

\numberwithin{equation}{section}
\newcommand{\veps}{{\varepsilon}}
\newcommand{\kap}{{\kappa}}
\newcommand{\tu}{{\tilde u}}

\newcommand{\tv}{{\tilde v}}

\newcommand{\gs}{\geqslant}
\newcommand{\ls}{\leqslant}

\newcommand{\dx}{{\mathrm{d}x}}

\usepackage{epsfig}
\usepackage{graphicx}
\usepackage{colordvi}
\usepackage{graphics}
\usepackage{color}
\usepackage{multirow}

\makeatletter
\@namedef{subjclassname@2020}{%
  \textup{2020} Mathematics Subject Classification}
\makeatother

\begin{document}

\title[On a chemotaxis-growth system with unequal dynamic Dirichlet data]{Boundary symmetry breaking via logistic damping in a chemotaxis-growth system}

\author[Y. Chen]{Yiren Chen}
\address[Y. Chen]{School of Mathematical Sciences, Shenzhen University, Shenzhen 518060, China}
\email{yrchen@szu.edu.cn}

\author[P. Fuster Aguilera]{Padi Fuster Aguilera}
\address[P. Fuster Aguilera]{Department of Mathematics\\
University of Colorado Boulder\\ Campus Box 395, Boulder, CO, 80309, USA}
\email{padi.fuster@colorado.edu}

\author[V. Martinez]{Vincent Martinez}
\address[V. Martinez]{Department of Mathematics and Statistics, CUNY Hunter College\\
New York, NY 10065, USA\\
Department of Mathematics \\
CUNY Graduate Center \\
New York, NY 10016
}
\email{vrmartinez@hunter.cuny.edu}

\author[K. Zhao]{Kun Zhao}
\address[K. Zhao]{School of Mathematical Sciences, Harbin Engineering University, Harbin 150001, China}
\email{kzhao@hrbeu.edu.cn}

\keywords{Chemotaxis; logarithmic sensitivity; logistic growth; dynamic boundary condition; classical solution; global stability}
\subjclass[2020]{35Q92, 35K51, 35M33, 35B35, 35B40}

\begin{abstract}
We establish global stability for a chemotaxis-growth model with logarithmic sensitivity under dynamic Dirichlet boundary conditions on a 1D domain. We analyze both parabolic-parabolic and parabolic-hyperbolic systems. The key challenge is handling time-dependent boundary data for the unknown functions. We overcome this by introducing dynamic reference profiles which suitably interpolate boundary values. Using an expanded entropy functional measuring deviation from these profiles, we prove 
energy estimates the uniform boundedness of solutions and global asymptotic stability of perturbations. 
\end{abstract}
\maketitle

\section{Introduction}
Chemotaxis, which is the directed movement of organisms along chemical gradients, is fundamental to many essential biological processes, including bacterial pattern formation, embryonic development, immune cell trafficking, and many more. A fundamental mathematical model of this phenomenon is given by the Keller-Segel system \cite{KS}:
\begin{align}\label{eq:keller:segel}
    \begin{split}
    u_t&=-\nabla\cdot(-D\nabla u+\chi(u,c)
    \nabla c)+f(u,c),\\
    c_t&=-\nabla\cdot(-\veps\nabla c)+g(u,c),
    \end{split}
\end{align}
where $u(x,t)\geqslant 0$ denotes the population density of the organism, and $c(x,t)\geqslant 0$ the chemical concentration. The chemotactic flux, represented by $\chi(u,c)\nabla c$, 
 asserts that while chemotactic motion should follow the negative gradient of the chemical potential, it should, in general, be sensitive to the local density of both the organism and chemical; this sensitivity is modeled by $\chi$. 
The positive parameter $D$ represents the diffusivity or the organism density,  while $\veps$ represents the diffusivity of the chemical.

This paper focuses on a specific variant of \eqref{eq:keller:segel} in which cell density dynamics is coupled with a chemical signal in 1D that simultaneously features chemotaxis with logarithmic sensitivity \cite{stevens1997aggregation} \textit{and} logistic cell growth.  In particular, we will assume that
    \begin{align}\label{eq:assumptions}
      \chi(u,c)=\chi \frac{u}c,\quad f(u,c)=\kappa_1u\left(1-\frac{u}{\kappa_2}\right),\quad g(u,c)=\mu u^\gamma c-\sigma c,
    \end{align}
 {where $\kap_1,\kap_2>0$, $\mu,\sigma,\chi, \gamma \in\mathbb{R}$; $\chi$ is attractant when ($\chi>0$) and repulsive when $\chi<0$), while $|\chi|$  {measures} the strength of chemotactic sensitivity. Notice that under these assumptions, $\chi(u,c)\nabla c=\chi u\nabla \phi(c)$, where $\phi(c)=\ln c$}. 

By applying the transformations $\hat c=e^{\sigma t}c$ (which simplifies the linear term $\sigma c$), and $\hat u= u/\kappa_2$, and the standard Cole-Hopf transformation for the resulting system, i.e., setting $v=(\ln \hat c)_x$. Avoiding the hat notation, we arrive at
\begin{equation}\label{eq:main1}
\begin{aligned}
    u_t+\chi(u v)_x&=D u_{xx}+ \kappa_1u(1-u)\\
    v_t+\left(\mu\kappa_2^\gamma u^\gamma -\veps v^2\right)_x&=\veps v_{xx}.
\end{aligned}
\end{equation}
 For our purposes, we will assume that $\chi\mu>0$; this guarantees that the system is hyperbolic, given that in the case where $\chi\mu<0$, Levine-Sleeman \cite{LS} demonstrate that solutions can blow-up in finite time. 

 We note that by rescaling
 \[
 \hat v= \sqrt{\frac{\chi}{\mu \kappa_2^\gamma}} v, \quad \hat x= \sign(\chi)\frac{\sqrt{\chi \mu\kappa_2^\gamma}}{D}x, \quad \hat t=  \frac{\mu \kappa_2^\gamma \chi}{D} t,
 \]
 we get to the non-dimensional system (upon dropping the hat notation),
\begin{equation}\label{eq:main2}
\begin{aligned}
    u_t+\sign(\chi)(u v)_x&= u_{xx}+ r u(1-u)\\
    v_t+\sign(\chi)(u^\gamma -\frac{\varepsilon}{\chi} v^2)_x&= \frac{\varepsilon}{D}  v_{xx},
\end{aligned}
\end{equation}
where $r=D\kappa_1/\mu \kappa_2^\gamma \chi$. Since we will not be concerned with studying singular limits, we will assume $\chi=-1, D=r=1$, and either $\veps=1$ or $\veps=0$, for the convenience of the analysis.
Thus, the governing system that we are analysing in this paper comprises reaction-diffusion-advection equations:
\begin{equation}\label{0.1}
\begin{aligned}
u_t - (vu)_x &= u_{xx} + u(1-u), \quad &x&\in \mathbb{R},\quad t>0,\\
v_t - (v^2)_x &= v_{xx} + (u^\gamma)_x, &x&\in \mathbb{R},\quad t>0,
\end{aligned}
\end{equation}
or its hyperbolic counterpart, that is, when $\veps=0$ in \eqref{eq:main2}:
\begin{equation}\label{0.2}
\begin{aligned}
u_t - (u v)_x &= u_{xx} + u(1-u), \quad &x&\in \mathbb{R},\quad t>0,\\
v_t - (u^\gamma)_x &= 0, &x&\in \mathbb{R},\quad t>0.
\end{aligned}
\end{equation}

Analyzing systems \eqref{0.1} and \eqref{0.2}, particularly concerning global existence, boundedness, and asymptotic stability of solutions for large initial data, is challenging due to the potentially destabilizing advection-like terms. While significant progress exists for bounded domains, most results depend critically on boundary conditions. Static boundary conditions, modeling fixed quantities or fluxes (e.g., impermeable walls or constant reservoirs), are the most studied scenario \cite{FFH,DL1,LS,LSN,1d2,1d3,MD1,1d6,1d7,ZZ}.

However, many physical scenarios critically involve dynamic interactions across boundaries such as the controlled influx/efflux of cells or chemicals in laboratory experiments \cite{KalininNeumannSourjikWu2010, ParkAminzare2021}, in clinical research (e.g. administering drug delivery in cancer therapeutics \cite{tele}), or interaction with reservoirs at the boundary whose concentrations might naturally change over time, for instance, due to seasonal variation. Such situations naturally call for time-dependent boundary conditions in order to properly model the subsequent chemotaxis. In this paper, we study the following Dirichlet type dynamic boundary conditions: 
\begin{align}\label{0.3}
u(a,t)=\alpha_1(t),\quad u(b,t)=\alpha_2(t), \quad v(a,t)=\beta_1(t),\quad v(b,t)=\beta_2(t), \quad t \geqslant 0,
\end{align}
where the boundary data $\alpha_i$ and $\beta_i$ ($i=1,2$) are prescribed time-dependent functions. In the presence of such boundary conditions, the system is constantly driven away from any fixed-boundary equilibrium, which subsequently complicates the adaptation of energy-type methods previously developed in the presence of constant boundary conditions.

Analysis of system \eqref{0.1} under dynamic Dirichlet conditions began for the no-growth case:
\begin{equation}\label{0.4}
\begin{aligned}
u_t - (u v)_x &= u_{xx}, \\
v_t - (u^\gamma)_x &= v_{xx} + (v^2)_x.
\end{aligned}
\end{equation}
Global stability of large-data classical solutions to \eqref{0.4} was established in \cite{1d5} for $\gamma=1$ and \cite{FXXZ} for $\gamma>1$ under equal boundary conditions: $u(a,t)=u(b,t)$, $v(a,t)=v(b,t)$, with suitable integrability assumptions on the boundary data and derivatives. This was later generalized to scenarios where $v(a,t) \neq v(b,t)$ \cite{FZ} and to the hyperbolic counterpart \cite{FXXZ,FZ}:
\begin{equation*}
\begin{aligned}
u_t - (u v)_x &= u_{xx}, \\
v_t - (u^\gamma)_x &= 0.
\end{aligned}
\end{equation*}
However, the biologically crucial case of unequal cell density boundary values ($u(a,t) \neq u(b,t)$), essential for modeling asymmetric cell flows or chemical gradients across a domain (e.g., chemotaxis assays, tissue interfaces, directed migration), remains largely unexplored.

This paper bridges this gap, establishing the global stability of solutions for both the parabolic-parabolic \eqref{0.1} and parabolic-hyperbolic \eqref{0.2} chemotaxis-growth systems on a bounded interval $[a, b]$, under general time-dependent Dirichlet boundary conditions \eqref{0.3}, including the important physical case $u(a,t) \neq u(b,t)$. The core challenge is handling the dynamic boundary forcing. To overcome this, we introduce dynamically evolving reference profiles $\alpha(x,t)$ and $\beta(x,t)$ (or $\Psi(t)$ for the hyperbolic case) that linearly interpolate the boundary data for $u$ and $v$ at each time $t$. Our analysis centers on a carefully designed, expanded entropy functional that measures deviations from these dynamic references, which ultimately allows us to carry out an energy-method type analysis in a novel way to rigorously establish the following results:
\begin{itemize}
\item[1.] Uniform boundedness: solution remains bounded in Sobolev norms for all $t>0$. Biologically, this ensures cell density never blows up to infinity, maintaining physically realistic levels within the system despite dynamic boundary inputs.

\item[2.] Time integrability: higher-order spatial and temporal derivatives are integrable over $[0, \infty)$. Mathematically, this provides crucial control over solution variations over time. Biologically, it implies that rapid, uncontrolled fluctuations dampen out, leading to smoother dynamics.

\item[3.] Asymptotic stability: the deviations $\tilde u = u - \alpha$ and $\tilde v = v - \beta$ (or $\tilde v = v - \Psi$) converge to zero as $t\to\infty$. This is the central biological result: It demonstrates that the system's internal state asymptotically aligns with the dynamically imposed boundary conditions. Regardless of initial internal distributions and complex chemotactic interactions, the solution eventually evolves to match the profile linearly interpolated between the time-varying boundary values. The system is globally attracted to the state defined by its dynamic boundaries.
\end{itemize}
These results hold under natural assumptions: positivity of cell density boundary data ($\alpha_1(t)$, $\alpha_2(t) > 0$) and suitable integrability conditions on the boundary data and their derivatives \eqref{BA}.

To our knowledge, this work provides the first comprehensive global stability analysis for systems \eqref{0.1} and \eqref{0.2} under general dynamic Dirichlet conditions, fully relaxing the constraint of equal cell density at boundaries. The developed techniques -- notably the use of dynamic reference profiles, tailored expanded entropy, and a domain splitting argument -- offer a potentially powerful framework for analyzing diverse reaction-diffusion systems subject to time-dependent boundary forcing, relevant to many biological and physical contexts.

The rest of this paper is organized as follows: Section \ref{PP} details the analysis and energy estimates for the parabolic-parabolic system \eqref{0.1}, establishing global stability. Section \ref{PH} adapts the methodology to the parabolic-hyperbolic system \eqref{0.2}, addressing the reduced dissipation and deriving analogous stability results. The paper ends with concluding remarks in Section \ref{C}.

\begin{notation}
Throughout this paper, we denote the norms of $L^p((a,b))$, $p\in [1,\infty]$, and $H^k((a,b))$ by $\|\cdot\|_{L^p}$ and $\|\cdot\|_{H^k}$, respectively. Unless otherwise specified, $C > 0$ represents a generic constant, independent of the solution, whose value may vary from line to line.
\end{notation}

\section{Parabolic-Parabolic System under Dynamic Dirichlet B.C.}\label{PP}

In this section, we analyze the dynamics of system \eqref{0.1} under the boundary conditions \eqref{0.3}. For the reader's convenience, we first present the initial-boundary value problem:
\begin{subequations}\label{1.1}
\begin{alignat}{5}
u_t - (u v)_x &= u_{xx} + u(1-u), \quad &x&\in (a,b),\ t>0, \label{1.1a}\\
v_t - (u^\gamma)_x &=  v_{xx} + (v^2)_x, \quad &x&\in (a,b),\ t>0; \label{1.1b}\\
(u,v)(x,0)&=(u_0,v_0)(x), \quad &x&\in [a,b], \label{1.2}\\
u(a,t)&=\alpha_1(t),\quad u(b,t)=\alpha_2(t), \quad &t& \geqslant 0, \label{1.3a}\\
v(a,t)&=\beta_1(t),\quad v(b,t)=\beta_2(t), \quad &t& \geqslant 0, \label{1.3b}
\end{alignat}
\end{subequations}
where $u_0$, $v_0$ are given initial data whose properties will be specified later. To state the main results, we define the reference profiles interpolating the boundary data:
\begin{subequations}\label{1.4}
\begin{alignat}{2}
\alpha(x,t) &\triangleq \alpha_1(t) + \frac{x-a}{b-a}\left[\alpha_2(t)-\alpha_1(t)\right], \quad &x&\in[a,b],\ t\geqslant 0,\label{1.4a}\\
\beta(x,t) &\triangleq \beta_1(t) + \frac{x-a}{b-a}\left[\beta_2(t)-\beta_1(t)\right],\quad &x&\in[a,b],\ t\geqslant 0.\label{1.4b}
\end{alignat}
\end{subequations}
Our main theorem is stated as follows.

\begin{theorem}\label{thm1}
Consider the initial-boundary value problem \eqref{1.1} with $\gamma\geqslant 1$. Suppose that the initial data satisfy $u_0 > 0$, $u_0,v_0 \in H^1((a,b))$, and are compatible with the boundary conditions. Assume that the boundary data $\alpha_1$, $\alpha_2$, $\beta_1$, $\beta_2$ are smooth functions on $\mathbb{R}_+=[0,\infty)$, satisfying
\begin{align}
\alpha_1(t) \geqslant  \underline{\alpha_1}, \quad \alpha_2(t) \geqslant  \underline{\alpha_2}, \quad \text{and} \quad \alpha_1-1,\alpha_2-1,\beta_1,\beta_2 \in W^{1,1}(\mathbb{R}_+),\label{BA}
\end{align}
where $\underline{\alpha_1}$, $\underline{\alpha_2}>0$ are constants. Then there exists a unique solution to the IBVP, such that for $\forall\,t>0$,
\begin{align*}
\|(u-\alpha)(t)\|^2_{H^1}+\|(v-\beta)(t)\|^2_{H^1} + \int_0^t \left(\|(u-\alpha)(\tau)\|^2_{H^2}+\|(v-\beta)(\tau)\|^2_{H^2}\right)\mathrm{d}\tau \leqslant C,
\end{align*}
where $\alpha$ and $\beta$ are defined by \eqref{1.4}. Moreover, the solution satisfies
\begin{align}\label{AB1}
\|(u-\alpha)(t)\|_{H^1}+\|(v-\beta)(t)\|_{H^1} \to 0\quad\text{as}\quad t\to\infty.
\end{align}
\end{theorem}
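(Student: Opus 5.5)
Set $\tu=u-\alpha$ and $\tv=v-\beta$, so that $\tu,\tv$ vanish at $x=a,b$. The perturbation system is
\[
\tu_t-(uv)_x=\tu_{xx}+u(1-u)-\alpha_t,\qquad \tv_t-(u^\gamma)_x=\tv_{xx}+(v^2)_x-\beta_t,
\]
where $\alpha_{xx}=\beta_{xx}=0$ and $\alpha_t,\beta_t\in L^1(\mathbb{R}_+)$ uniformly in $x$ by \eqref{BA}. Local existence and uniqueness come from standard parabolic theory, and positivity of $u$ follows from the maximum principle since $\alpha_i>0$ and $u_0>0$. Global existence then reduces to a priori bounds, which I would derive in three steps.

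\textbf{Step 1: zeroth-order entropy estimate.} This is the main step. I would use the relative entropy
\[
E(t)=\int_a^b\Big(u\ln\frac{u}{\alpha}-u+\alpha\Big)\dx+\frac{1}{2\gamma}\int_a^b \tv^2\dx \qquad(\gamma=1),
\]
and for $\gamma>1$ I would replace the first integrand by the Bregman divergence of $\phi(u)=\frac{u^\gamma}{\gamma(\gamma-1)}$ relative to $\alpha$, which cancels the cross terms $u^{\gamma-1}u_xv$ against $(u^\gamma)_x\tv$.

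Differentiating gives the dissipation $\int u_x^2/u$ (or $\int u^{\gamma-2}u_x^2$) and $\int\tv_x^2$. Cancellation of the chemotactic cross term is only exact up to contributions from $\beta$ and from $\alpha_x\neq0$: terms such as $\int u\,\alpha_x v/\alpha$ and $\int u\beta\,u_x/u$ appear. The key difficulty is exactly here, because $\alpha_x=(\alpha_2-\alpha_1)/(b-a)$ does not decay. It does tend to $0$, since $\alpha_i\to1$, but it is not necessarily integrable in time beyond $W^{1,1}$.

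I expect the logistic term to resolve this. It contributes
\[
\int (1-u)u\ln\frac{u}{\alpha}\dx,
\]
which behaves like $-\int u\,\tu\ln(u/\alpha)$ plus terms in $1-\alpha$. This is a strongly dissipative quantity, superlinear in $u$ for large $u$, and it should absorb $\int u|\alpha_x||\tv|$ via Young's inequality. The leftover terms are controlled by $C|1-\alpha_i|$ and $|\beta_i|$, which are $L^1$ in time.

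The $(v^2)_x\tv$ term reduces to $\int \beta_x v^2$-type terms, since $\int (\tv^2)_x\tv=0$, and these are handled with the boundary decay, where $\beta\in L^1\cap L^\infty$. Time derivatives of $\alpha$ give $\int u\,\alpha_t/\alpha$, bounded by $\|\alpha_t\|_{L^\infty}(E+C)$, and Gronwall applies with $L^1$ coefficients.

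If the growth in $\alpha_x$ cannot be fully absorbed globally, I would split the domain into the regions $\{u\le M\}$ and $\{u>M\}$ (the domain splitting mentioned in the introduction). On the first region the terms are linear and small as $\alpha_x\to0$; on the second the logistic damping dominates. The outcome should be $\sup_t E+\int_0^\infty(\|\tu_x\|^2+\|\tv_x\|^2+\text{logistic dissipation})<\infty$.

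\textbf{Step 2: first-order estimates.} Test the $\tv$ equation with $-\tv_{xx}$ and the $\tu$ equation with $-\tu_{xx}$. The $L^\infty$ bounds come from Gagliardo–Nirenberg together with Step 1, since $\tu,\tv$ vanish at the boundary, so $\|\tv\|_{L^\infty}^2\le C\|\tv\|\|\tv_x\|$. The nonlinear terms are then bounded by $\frac12\|\tv_{xx}\|^2+C\|\tv_x\|^2(\|\tu_x\|^2+\|\tv_x\|^2+1)$. Because $\|\tu_x\|^2,\|\tv_x\|^2\in L^1(0,\infty)$, Gronwall closes and yields the $H^1$ bound together with $\int_0^\infty \|(\tu,\tv)\|_{H^2}^2<\infty$.

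\textbf{Step 3: decay.} Let $f=\|\tu_x\|^2+\|\tv_x\|^2$. Then $f\in L^1(\mathbb{R}_+)$, and the differential inequality from Step 2 bounds $|f'|$ by an $L^1$ function. Hence $f(t)\to0$. By Poincaré (zero boundary values), $\|\tu\|_{H^1}+\|\tv\|_{H^1}\to0$, which is \eqref{AB1}.
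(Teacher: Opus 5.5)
There is a genuine gap between your Step 1 and Step 2. The entropy estimate does not give $\int_0^\infty\|\tu_x\|_{L^2}^2\,\mathrm{d}t<\infty$. The dissipation it produces is $\int_a^b u_x^2/u\,\dx$ (resp.\ $\gamma\int_a^b u^{\gamma-2}u_x^2\,\dx$). For $\gamma\neq2$ this degenerates where $u$ is large ($\gamma<2$) or where $u$ is small ($\gamma>2$), and at this stage you have neither an upper bound nor a uniform positive lower bound on $u$. Interpolation does not rescue it. For $\gamma=1$, $\|u_x\|_{L^2}^2\leqslant4\|u\|_{L^\infty}\|(\sqrt u)_x\|_{L^2}^2$ and $\|u\|_{L^\infty}\lesssim1+\|u\|_{L^1}^{1/2}\|(\sqrt u)_x\|_{L^2}$, so you only get a bound by $\|(\sqrt u)_x\|_{L^2}^2+\|(\sqrt u)_x\|_{L^2}^3$, and the cubic part is not known to be time-integrable. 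Moreover, for $1\leqslant\gamma<2$ the functional is subquadratic in $u$, so Step 1 does not even bound $\|\tu\|_{L^2}$ uniformly in time. Yet Step 2 explicitly uses $\|\tu_x\|_{L^2}^2\in L^1(0,\infty)$ to close Gronwall; for instance, the logistic term contributes $\|\tu^2\|_{L^2}^2\lesssim\|\tu_x\|_{L^2}^4$. Step 3 uses the same fact to get $f\in L^1$. Both steps therefore rest on an input you have not established.

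The missing ingredient is the intermediate $L^2$ energy estimate for $\tu$ that the paper carries out in $\S$2.1.2 and $\S$2.2.2:
\begin{itemize}
\item Test the $\tu$-equation with $\tu$ and drop $2\|\sqrt u\,\tu\|_{L^2}^2\geqslant0$.
\item Bound $2|\int\tu\tv\tu_x|\leqslant\frac12\|\tu_x\|_{L^2}^2+C\|\tu\|_{L^2}^2\|\tv_x\|_{L^2}^2$ using $\|\tv\|_{L^\infty}^2\leqslant(b-a)\|\tv_x\|_{L^2}^2$. For $\gamma=1$ the paper instead uses $\|\tu\|_{L^\infty}^2\leqslant C(\|u\|_{L^1}\|(\sqrt u)_x\|_{L^2}^2+|\alpha_2-\alpha_1|^2)$, which follows from $u_x=2\sqrt u(\sqrt u)_x$.
\item Bound $2|\int\alpha\tv\tu_x|\leqslant\frac14\|\tu_x\|_{L^2}^2+C\|\tv_x\|_{L^2}^2$.
\item Apply Gronwall with an integrable coefficient such as $\mathsf{X}(t)+\|\tv_x\|_{L^2}^2$, which Step 1 does supply.
\end{itemize}
This gives $\sup_t\|\tu\|_{L^2}^2+\int_0^\infty\|\tu_x\|_{L^2}^2\,\mathrm{d}t<\infty$, and after that your Step 2 works. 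For $\gamma>1$, however, you must order it: close the $\tu_x$ estimate first (no $u^\gamma$ enters it), deduce $\|u\|_{L^\infty}\leqslant C$, and only then use $\|(u^\gamma)_x\|_{L^2}\leqslant C\|u_x\|_{L^2}$ in the $\tv_x$ estimate. The $(u^\gamma)_x$ contribution is not of the form $\|\tv_x\|_{L^2}^2(\cdots)$ you wrote.

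A smaller point concerns your diagnosis in Step 1. Under \eqref{BA}, $\alpha_2-\alpha_1=(\alpha_2-1)-(\alpha_1-1)\in L^1(\mathbb{R}_+)$, so $\alpha_x$ is an admissible Gronwall coefficient. The real obstruction is that $\int uv\alpha_x/\alpha$ is not controlled by the entropy, which bounds $u$ only in $L\log L$. Your remedy is the paper's: absorb this term into the logistic dissipation (for $\gamma=1$, $\int u\tu(\ln u-\ln\alpha)\geqslant\int_{\{u>\alpha\}}\tu^2$) after splitting the domain. The paper splits at $u=\alpha$ rather than $u=M$. Your Step 3 matches the paper's decay argument.
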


\begin{remark}
Theorem \ref{thm1} does not impose smallness restrictions on the initial or boundary data in the corresponding norm. Nevertheless, some degree of ``smallness" is enforced by our assumptions through integrability in $\mathbb{R}_+$. On the other hand, while the conditions in \eqref{BA} imply that the moduli of the boundary data are uniformly bounded in time, they may be large for a long transient period; this is one of the main sources of technical difficulties for us. The result \eqref{AB1} thus establishes the global asymptotic stability of the deviations of $u$ and $v$ from their (potentially large) reference profiles provided that the profiles at the boundary and its oscillations decay at infinity in an integrable fashion. Note that we will frequently make use of the uniform boundedness of the boundary profiles in time in the proof of the above theorem. 
\end{remark}

The proof of Theorem \ref{thm1} comprises three essential components: local well-posedness, {\it a priori} estimates, and a continuation argument. The first step can be established using standard techniques, such as Galerkin approximation and the contraction mapping principle, augmented by the energy estimates developed in this paper. The core of this section is devoted to establishing the {\it a priori} estimates for the local solution. Once these estimates are secured, the global well-posedness and long-time behavior of the solution follow via a standard continuation argument. In deriving the {\it a priori} estimates, two elements are essential: an expanded entropy functional and a domain splitting argument that is novel in this context. The former measures the solution's deviation from its dynamic reference profile, while the latter controls a quadratic nonlinearity arising from the dynamic boundary interpolation profile. Since the expanded entropy functional takes different forms when the value of $\gamma$ varies, we split the proof of the {\it a priori} estimates into two subsections for $\gamma=1$ and $\gamma > 1$.

\subsection{Proof of Theorem \ref{thm1} when $\gamma=1$}

We organize the proof into four subsections.

\subsubsection{Entropy-based estimates}

We define the expanded entropy functional:
\begin{align*}
\mathcal{E}_1(t) \triangleq \int_a^b \left[(u\ln u - u) - (\alpha \ln \alpha - \alpha) - \ln\alpha(u-\alpha)\right](x,t)\mathrm{d}x,
\end{align*}
where $\alpha$ is defined by \eqref{1.4a}. A direct calculation shows that 
\begin{align}\label{A2}
\frac{\mathrm{d}}{\mathrm{d}t} \mathcal{E}_1(t) + 4\|(\sqrt{u})_x\|_{L^2}^2 + \int_a^b u_xv\,\mathrm{d}x + \int_a^b u(u-\alpha)(\ln u - \ln\alpha)\mathrm{d}x = \sum_{k=1}^4 \mathsf{R}_{1,k},
\end{align}
where the quantities on the right-hand side are defined by
\begin{align*}
&\mathsf{R}_{1,1} \triangleq \int_a^b \frac{u_x\alpha_x}{\alpha}\mathrm{d}x, \qquad \mathsf{R}_{1,2} \triangleq \int_a^b \frac{uv\alpha_x}{\alpha}\mathrm{d}x, \qquad \mathsf{R}_{1,3} \triangleq - \int_a^b \frac{(u-\alpha)\alpha_t}{\alpha}\mathrm{d}x, \\
&\mathsf{R}_{1,4} \triangleq \int_a^b u(1-\alpha)(\ln u - \ln\alpha)\mathrm{d}x. 
\end{align*}
Using \eqref{1.4a}, \eqref{BA} and the positivity of $u$ (which follows from the positivity of the initial and boundary conditions, together with the maximum principle), we can show that
\begin{align}\label{A3}
|\mathsf{R}_{1,1}| \leqslant  2\|(\sqrt{u})_x\|_{L^2}^2 + C|\alpha_2-\alpha_1| \|u\|_{L^1}.
\end{align}
To control $\mathsf{R}_{1,2}$, first note that for any fixed $t>0$,
\begin{align*}
\int_a^b u(u-\alpha)(\ln u - \ln\alpha)\mathrm{d}x = \int_{\{u>\alpha\}} \frac{u}{\xi}(u-\alpha)^2\mathrm{d}x + \int_{\{u\leqslant \alpha\}} u(u-\alpha)(\ln u - \ln\alpha)\mathrm{d}x,
\end{align*}
where $\xi$ is between $u$ and $\alpha$. Since $u$ and $\alpha$ are positive, $u(u-\alpha)(\ln u - \ln\alpha)\geqslant 0$. Hence, 
\begin{align}\label{A5}
\int_a^b u(u-\alpha)(\ln u - \ln\alpha)\mathrm{d}x \geqslant \int_{\{x:\, u>\alpha\}} (u-\alpha)^2\mathrm{d}x.
\end{align}
Again, by virtue of the positivity of $u$ and $\alpha$,
\begin{align}\label{A6}
|\mathsf{R}_{1,2}| \leqslant \int_{\{x:\, u>\alpha\}}\frac{|u-\alpha||v||\alpha_x|}{\alpha}\mathrm{d}x + \int_{\{x:\, u>\alpha\}} |v||\alpha_x|\mathrm{d}x +\int_{\{x:\, u\leqslant\alpha\}}\frac{u|v||\alpha_x|}{\alpha}\mathrm{d}x.
\end{align}
By Schwarz inequality and boundedness of $\alpha$, we can show that
\begin{align}\label{A7}
\int_{\{x:\, u>\alpha\}}\frac{|u-\alpha||v||\alpha_x|}{\alpha}\mathrm{d}x \leqslant \frac12\int_{\{x:\, u>\alpha\}}(u-\alpha)^2\mathrm{d}x + C|\alpha_2-\alpha_1|\|v\|_{L^2}^2.
\end{align}
For the rest on the right-hand side of \eqref{A6}, we have 
\begin{align}\label{A8}
\int_{\{x:\, u>\alpha\}} |v||\alpha_x|\mathrm{d}x +\int_{\{x:\, u\leqslant \alpha\}}\frac{u|v||\alpha_x|}{\alpha}\mathrm{d}x \leqslant C|\alpha_2-\alpha_1| \left(\|v\|_{L^2}^2 + 1\right).
\end{align}
Using \eqref{A5}, \eqref{A7} and \eqref{A8}, we update \eqref{A6} as 
\begin{align}\label{A9}
|\mathsf{R}_{1,2}| \leqslant \frac12\int_a^b u(u-\alpha)(\ln u - \ln\alpha)\mathrm{d}x + C|\alpha_2-\alpha_1| \left(\|v\|_{L^2}^2 + 1\right).
\end{align}
For $\mathsf{R}_{1,3}$, using the definition of $\alpha$, we infer that 
\begin{align}\label{A10}
|\mathsf{R}_{1,3}| \leqslant C\left(|\alpha_1'|+|\alpha_2'|\right) \left(\|u\|_{L^1} + 1\right).
\end{align}
To bound $\mathsf{R}_{1,4}$, note that since $u$ and $\alpha$ are positive, 
\begin{align*}
0\leqslant \left[(u\ln u - u) - (\alpha \ln \alpha - \alpha) - \ln\alpha(u-\alpha)\right] = u(\ln u - \ln\alpha) -  (u-\alpha),
\end{align*}
which implies
\begin{align}\label{A12}
 \int_a^b \left|u(\ln u - \ln\alpha)\right|\mathrm{d}x \leqslant \mathcal{E}_1 + \|u\|_{L^1} + \|\alpha\|_{L^1}.
\end{align}
Using \eqref{A12} and boundedness of $\alpha$, we can show that
\begin{align}\label{A13}
|\mathsf{R}_{1,4}| \leqslant C\left(|1-\alpha_1|+|\alpha_2-\alpha_1|\right)\left(\mathcal{E}_1 + \|u\|_{L^1} + 1\right).
\end{align}
Substituting \eqref{A3}, \eqref{A9}, \eqref{A10} and \eqref{A13} into \eqref{A2} gives
\begin{align}\label{A14}
&\ 2\frac{\mathrm{d}}{\mathrm{d}t} \mathcal{E}_1(t) + 4\|(\sqrt{u})_x\|_{L^2}^2 + 2\int_a^b u_xv\,\mathrm{d}x + \int_a^b u(u-\alpha)(\ln u - \ln\alpha)\mathrm{d}x \notag\\
\leqslant &\ C\left(|1-\alpha_1|+|\alpha_2-\alpha_1| + |\alpha_1'|+|\alpha_2'| \right)\left(\mathcal{E}_1 + \|u\|_{L^1} + \|v\|_{L^2}^2 +1\right).
\end{align}
Furthermore, according to \cite{1d5}, we have 
\begin{align*}
0< u \leqslant (u\ln u - u) - (\alpha \ln \alpha - \alpha) - \ln\alpha(u-\alpha) + (e-1)\alpha,
\end{align*}
which, together with the boundedness of $\alpha$, yields
\begin{align}\label{A16}
0< \|u\|_{L^1} \leqslant \mathcal{E}_1 + C.
\end{align}
Substituting \eqref{A16} into \eqref{A14}, we obtain
\begin{align}\label{A17}
&\ 2\frac{\mathrm{d}}{\mathrm{d}t} \mathcal{E}_1(t) + 4\|(\sqrt{u})_x\|_{L^2}^2 + 2\int_a^b u_xv\,\mathrm{d}x + \int_a^b u(u-\alpha)(\ln u - \ln\alpha)\mathrm{d}x \notag\\
\leqslant &\ C\left(|1-\alpha_1|+|\alpha_2-\alpha_1| + |\alpha_1'|+|\alpha_2'| \right)\left(\mathcal{E}_1 + \|v\|_{L^2}^2 +1\right).
\end{align}
In the next step, we utilize Eq.\,\eqref{1.1b} to compensate the third term on the left-hand side of \eqref{A17}.

Let $\tilde{v}(x,t) \triangleq v(x,t) - \beta(x,t)$, where $\beta$ is defined by \eqref{1.4b}. Then we deduce from \eqref{1.1b}:
\begin{align}\label{A18}
\tilde{v}_t - u_x = \tilde{v}_{xx} + (\tilde{v}^2)_x + 2(\tilde{v}\beta)_x + (\beta^2)_x - \beta_t.
\end{align}
Note that $\tilde{v}|_{x=a}=\tilde{v}|_{x=b}=0$. Taking the $L^2$ inner product of \eqref{A18} with $\tilde{v}$ yields 
\begin{align}\label{A21}
&\ \frac{\mathrm{d}}{\mathrm{d}t}\|\tilde{v}\|_{L^2}^2 + 2\|\tilde{v}_x\|_{L^2}^2 - 2\int_a^b u_xv\,\mathrm{d}x \notag\\
 = &\ 2\int_a^b \beta_x \tilde{v}^2\mathrm{d}x + 4\int_a^b \beta\beta_x\tilde{v}\,\mathrm{d}x - 2\int_a^b \beta_t\tilde{v}\,\mathrm{d}x + 2\int_a^b u \beta_x\mathrm{d}x + 2(\alpha_1\beta_1 - \alpha_2\beta_2).
\end{align}
Using Schwarz inequality, the boundary conditions and \eqref{A16}, we infer that 
\begin{align}\label{A22}
&\ \left| 2\int_a^b \beta_x \tilde{v}^2\mathrm{d}x + 4\int_a^b \beta\beta_x\tilde{v}\,\mathrm{d}x - 2\int_a^b \beta_t\tilde{v}\,\mathrm{d}x + 2\int_a^b u \beta_x\mathrm{d}x + 2(\alpha_1\beta_1 - \alpha_2\beta_2)\right| \notag\\
\leqslant &\ C\left( |\alpha_1-\alpha_2| + |\beta_2-\beta_1| + |\beta_1'| + |\beta_2'| \right) \left( \mathcal{E}_1 + \|\tilde{v}\|_{L^2}^2 + 1\right).
\end{align}
Updating \eqref{A21} by \eqref{A22}, then adding the result to \eqref{A17}, we get 
\begin{align*}
\frac{\mathrm{d}}{\mathrm{d}t} \left(2\mathcal{E}_1 + \|\tilde{v}\|_{L^2}^2\right)  + 4\|(\sqrt{u})_x\|_{L^2}^2 + 2\|\tilde{v}_x\|_{L^2}^2 
\leqslant C\mathsf{X}(t)\left( 2\mathcal{E}_1 + \|\tilde{v}\|_{L^2}^2\right),
\end{align*}
where we have dropped the non-negative term stemming from logistic growth and 
\begin{align}\label{A24}
\mathsf{X}(t) \triangleq |1-\alpha_1|+|\alpha_2-\alpha_1| + |\alpha_1'|+|\alpha_2'| + |\beta_2-\beta_1| + |\beta_1'| + |\beta_2'|.
\end{align}
Applying Gr\"onwall's inequality and \eqref{BA}, we obtain
\begin{align}\label{A25}
\mathcal{E}_1(t) + \|\tilde{v}(t)\|_{L^2}^2  + \int_0^t \left(\|(\sqrt{u})_x\|_{L^2}^2 +\|\tilde{v}_x\|_{L^2}^2\right)\mathrm{d}\tau \leqslant C.
\end{align}
This completes the entropy-based estimates. \hfill $\square$

\subsubsection{$L^2$-estimates}\label{sect:L2:estimates}

Since $\mathcal{E}_1$ is sub-quadratic with respect to ${u}$ and the spacetime integral of $(\sqrt{u})_x$ is potentially degenerate (due to the lack of a uniform upper bound on $u$ at this point, which precludes obtaining a bound on $u_x$), we next turn to the standard $L^2$-based energy estimates. Letting $\tilde{u}\triangleq u-\alpha$, we obtain from \eqref{1.1a}:
\begin{align}\label{A26}
\tu_t - (\tu\tv)_x - (\tu\beta)_x - (\alpha \tv)_x - (\alpha\beta)_x = \tu_{xx} + u(1-\alpha) - u\tu - \alpha_t.
\end{align}
Taking the $L^2$ inner product of \eqref{A26} with $\tu$ yields
\begin{align}\label{A27}
\frac{\mathrm{d}}{\mathrm{d}t}\|\tu\|_{L^2}^2 + 2\|\tu_x\|_{L^2}^2 + 2\|\tu\sqrt{u}\|_{L^2}^2 = \sum_{k=1}^6 \mathsf{R}_{2,k},
\end{align}
where the $\mathsf{R}_{2,k}$'s are given by 
\begin{align*}
&\mathsf{R}_{2,1} \triangleq 2\int_a^b u(1-\alpha)\tu\,\mathrm{d}x, \quad \mathsf{R}_{2,2} \triangleq 2\int_a^b (\tu\tv)_x\tu\,\mathrm{d}x, \quad \mathsf{R}_{2,3} \triangleq 2\int_a^b (\tu\beta)_x\tu\,\mathrm{d}x, \\
&\mathsf{R}_{2,4} \triangleq 2\int_a^b (\alpha \tv)_x\tu\,\mathrm{d}x, \qquad \ \ \mathsf{R}_{2,5} \triangleq 2\int_a^b(\alpha\beta)_x\tu \,\mathrm{d}x, \ \ \  \mathsf{R}_{2,6} \triangleq 2\int_a^b \alpha_t\tu\,\mathrm{d}x.
\end{align*}
Using the definition and boundedness of $\alpha$, we infer that 
\begin{align}\label{A28}
|\mathsf{R}_{2,1}| \leqslant C\left(|1-\alpha_1| + |\alpha_2-\alpha_1|\right)\left(\|\tu\|_{L^2}^2+1\right).
\end{align}
Integrating by parts and invoking the uniform estimate of $\|\tv\|_{L^2}$ given by \eqref{A25} yield
\begin{align}\label{A29}
|\mathsf{R}_{2,2}| \leqslant 2\|\tu\|_{L^\infty}\|\tv\|_{L^2}\|\tu_x\|_{L^2} \leqslant \frac12\|\tu_x\|_{L^2}^2 + C\|\tu\|_{L^\infty}^2.
\end{align}
Note that for any fixed $t>0$ and $\forall\,x\in[a,b]$,
\begin{align}\label{A30}
\tu(x,t) = u(x,t)-\alpha(x,t) = \int_a^x u_y(y,t)\mathrm{d}y -\frac{x-a}{b-a}[\alpha_2(t) - \alpha_1(t)].
\end{align}
Using \eqref{A30}, \eqref{A16}, \eqref{A25}, and the boundedness of $\alpha_i$, we infer that 
\begin{align}\label{A31}
\|\tu\|_{L^\infty}^2 \leqslant C\left(\|(\sqrt{u})_x\|_{L^2}^2\|u\|_{L^1} + |\alpha_2 - \alpha_1|^2 \right) \leqslant C\left(\|(\sqrt{u})_x\|_{L^2}^2 + |\alpha_2 - \alpha_1|\right).
\end{align}
Substituting \eqref{A31} into \eqref{A29} gives 
\begin{align}\label{A32}
|\mathsf{R}_{2,2}| \leqslant \frac12\|\tu_x\|_{L^2}^2 + C\left(\|(\sqrt{u})_x\|_{L^2}^2 + |\alpha_2 - \alpha_1|\right).
\end{align}
By integration-by-parts, we can show that 
\begin{align}\label{A33}
|\mathsf{R}_{2,3}| + |\mathsf{R}_{2,4}| \leqslant C\left(|\beta_2-\beta_1| \|\tu\|_{L^2}^2 + \|\tv_x\|_{L^2}^2 \right) + \frac12\|\tu_x\|_{L^2}^2
\end{align}
where we applied the boundedness of $\alpha$ and Poincar\'e's inequality for $\tv$. For $\mathsf{R}_{2,5}$ and $\mathsf{R}_{2,6}$, we have
\begin{align}\label{A34}
|\mathsf{R}_{2,5}| + |\mathsf{R}_{2,6}| \leqslant C\left(|\alpha_2-\alpha_1|+|\beta_2-\beta_1|+|\alpha_1'|+|\alpha_2'|\right)\left( \|\tu\|_{L^2}^2 + 1\right).
\end{align}
Substituting \eqref{A28}, \eqref{A32}--\eqref{A34} into \eqref{A27} and applying Schwarz inequality yield
\begin{align}\label{A35}
&\ \frac{\mathrm{d}}{\mathrm{d}t}\|\tu\|_{L^2}^2 + \|\tu_x\|_{L^2}^2 \leqslant C\left(\mathsf{X}(t)+\|(\sqrt{u})_x\|_{L^2}^2+\|\tv_x\|_{L^2}^2\right)\left(\|\tu\|_{L^2}^2 +1\right),
\end{align}
where $\mathsf{X}(t)$ is defined in \eqref{A24} and we have dropped the non-negative term $2\|\tu\sqrt{u}\|_{L^2}^2$ from the left-hand side. Applying Gr\"onwall's inequality, together with \eqref{BA} and \eqref{A25}, we obtain
\begin{align}\label{A36}
\|\tu(t)\|_{L^2}^2 + \int_0^t \|\tu_x(\tau)\|_{L^2}^2 \mathrm{d}\tau \leqslant C.
\end{align}
This completes the $L^2$-estimates. \hfill $\square$

\subsubsection{$H^1$-estimates}

With the $L^2$-estimates at our disposal, we now improve the regularity of the solution. Taking the $L^2$ inner product of \eqref{A26} with $-\tu_{xx}$, we have
\begin{align}\label{A37}
\frac{\mathrm{d}}{\mathrm{d}t}\|\tu_x\|_{L^2}^2 + \|\tu_{xx}\|_{L^2}^2 \leqslant \|(\tu\tv)_x + (\tu\beta)_x + (\alpha\tv)_x + (\alpha\beta)_x + u(1-\alpha) - u\tu - \alpha_t\|_{L^2}^2.
\end{align}
By H\"older, Sobolev and Poincar\'e's inequalities, we infer that $\|(\tu\tv)_x\|_{L^2}^2 \lesssim \|\tu_x\|_{L^2}^2\|\tv_x\|_{L^2}^2$. Using this, together with the boundedness of the boundary data, we can show that 
\begin{align*}
\frac{\mathrm{d}}{\mathrm{d}t}\|\tu_x\|_{L^2}^2 + \|\tu_{xx}\|_{L^2}^2 \leqslant C\left(\mathsf{X}(t) + \|\tu_x\|_{L^2}^2+\|\tv_x\|_{L^2}^2 \right)\left(\|\tu_x\|_{L^2}^2+1\right).
\end{align*}
Applying Gr\"onwall's inequality, together with \eqref{BA}, \eqref{A25} and \eqref{A36}, we obtain 
\begin{align}\label{A39}
\|\tu_x(t)\|_{L^2}^2+ \int_0^t\|\tu_{xx}(\tau)\|_{L^2}^2\mathrm{d}\tau \leqslant C.
\end{align}
Similarly, we can show that
\begin{align*}
\frac{\mathrm{d}}{\mathrm{d}t}\|\tv_x\|_{L^2}^2 + \|\tv_{xx}\|_{L^2}^2 \leqslant C\left(\mathsf{X}(t) + \|\tu_x\|_{L^2}^2+\|\tv_x\|_{L^2}^2 \right)\left(\|\tv_x\|_{L^2}^2+1\right),
\end{align*}
which alongside Gr\"onwall's inequality yields
\begin{align}\label{A41}
\|\tv_x(t)\|_{L^2}^2+ \int_0^t\|\tv_{xx}(\tau)\|_{L^2}^2\mathrm{d}\tau \leqslant C.
\end{align}
This completes the $H^1$-estimates. \hfill $\square$

\subsubsection{Asymptotic stability}

By \eqref{A21}, \eqref{A35} and previous estimates, we can show that 
 \begin{align*}
\left|\left(\|\tu\|_{L^2}^2 + \|\tv\|_{L^2}^2 \right)'(t) \right| \leqslant C\left( \mathsf{X}(t) + \|\tu_x\|_{L^2}^2 +\|\tv_x\|_{L^2}^2 + \|(\sqrt{u})_x\|_{L^2}^2 \right),
\end{align*}
which implies $\left(\|\tu\|_{L^2}^2 + \|\tv\|_{L^2}^2 \right)'(t) \in L^1(\mathbb{R}_+)$. Since $\|\tu_x\|_{L^2}^2 + \|\tv_x\|_{L^2}^2 \in L^1(\mathbb{R}_+)$, it follows from Poincar\'e's inequality that $(\|\tu\|_{L^2}^2 + \|\tv\|_{L^2}^2)(t) \in L^1(\mathbb{R}_+)$. Hence, 
$\left(\|\tu\|_{L^2}^2 + \|\tv\|_{L^2}^2 \right)(t) \in W^{1,1}(\mathbb{R}_+)$. This yields $\left(\|\tu(t)\|_{L^2}^2 + \|\tv(t)\|_{L^2}^2 \right) \to 0$, as $t\to\infty$. The decay of $\|\tu_x(t)\|_{L^2}^2 + \|\tv_x(t)\|_{L^2}^2$ can be established similarly. This completes the proof of the {\it a priori} estimates for the local solution. Theorem \ref{thm1} then follows from a standard continuation argument. We omit the technical details for brevity.

\subsection{Proof of Theorem \ref{thm1} when $\gamma >1$} 

We need the following technical lemma for the proof.

\begin{lemma}
Let $\rho\geqslant 0$ and $s\geqslant 0$ be real numbers. Then the following inequalities hold:
\begin{align}
(\rho-1)^2 &\leqslant \rho^s-1-s(\rho-1), \hspace{1.86 in}  s \geqslant 2; \label{T1} \\
\rho -1 &\leqslant \rho^{s}-1 - s(\rho-1)+(s -1) (1+s^{-1})^\frac{s}{s-1} -s, \quad 1<s\leqslant 2. \label{T2}
\end{align}
\end{lemma}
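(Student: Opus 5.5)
For \eqref{T2} no case analysis is needed: the claim reduces to minimising a single convex function. Moving everything to one side, the right-hand side of \eqref{T2} minus $(\rho-1)$ equals
\begin{align*}
\varphi(\rho)\triangleq \rho^s-(s+1)\rho+(s-1)\left(1+s^{-1}\right)^{\frac{s}{s-1}},
\end{align*}
since the constants $-1+s-s+1$ cancel. For $s>1$ the map $\rho\mapsto\rho^s-(s+1)\rho$ is convex on $[0,\infty)$. Its unique critical point satisfies $s\rho_0^{s-1}=s+1$, that is, $\rho_0=(1+s^{-1})^{1/(s-1)}$. At this point
\begin{align*}
\rho_0^s-(s+1)\rho_0=\rho_0\left(\rho_0^{s-1}-(s+1)\right)=-\frac{(s+1)(s-1)}{s}\rho_0=-(s-1)\left(1+s^{-1}\right)^{\frac{s}{s-1}}.
\end{align*}
Hence $\varphi\geqslant\varphi(\rho_0)=0$, which is exactly \eqref{T2}. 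Equivalently, this is Young's inequality $(s+1)\rho\leqslant \rho^s+(s-1)(1+s^{-1})^{s/(s-1)}$ with the sharp constant. The restriction $s\leqslant2$ plays no role here; it only marks the complementary range to \eqref{T1}.

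For \eqref{T1}, I expand $(\rho-1)^2$ and rewrite the difference as
\begin{align*}
h(\rho)\triangleq \rho^s-1-s(\rho-1)-(\rho-1)^2=\rho^s-\rho^2-(s-2)(\rho-1).
\end{align*}
The goal is $h\geqslant0$ on $[0,\infty)$ for $s\geqslant2$. The case $s=2$ is trivial, since then $h\equiv0$, so assume $s>2$. We have $h(1)=0$ and $h'(\rho)=s\rho^{s-1}-2\rho-(s-2)$, so $h'(1)=0$.

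The main obstacle is that $h$ is \emph{not} globally convex. Indeed $h''(\rho)=s(s-1)\rho^{s-2}-2$ is negative near $\rho=0$, so the naive Taylor argument around $\rho=1$ fails for small $\rho$. I handle this by splitting $[0,\infty)$ at the inflection point $\rho_*=(2/(s(s-1)))^{1/(s-2)}$. Note that $\rho_*<1$ because $s(s-1)>2$.

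On $[\rho_*,\infty)$, $h$ is convex with $h(1)=h'(1)=0$. So it lies above its tangent line at $1$, which gives $h\geqslant0$ there, and in particular $h(\rho_*)\geqslant0$. On $[0,\rho_*]$, $h$ is concave, so its minimum is attained at an endpoint: $h\geqslant\min\{h(0),h(\rho_*)\}$. Here $h(0)=s-2\geqslant0$, and $h(\rho_*)\geqslant0$ from the previous step. Combining the two ranges yields $h\geqslant0$ on $[0,\infty)$, which proves \eqref{T1}.
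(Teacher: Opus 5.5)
Your proof is correct. For \eqref{T2} you follow the paper's argument. Its auxiliary function $f$ coincides with your $\varphi$ once the constants cancel, and its point $\rho_*=(1+s^{-1})^{\frac{1}{s-1}}$ is your $\rho_0$. The paper concludes from $f(\rho_*)=f'(\rho_*)=0$ and $f''>0$. Your appeal to convexity of $\rho^s$ on $[0,\infty)$ is marginally cleaner, because for $1<s<2$ the second derivative is undefined at $\rho=0$. Your observation that $s\leqslant 2$ is not needed applies to the paper's argument as well.

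The genuine difference is \eqref{T1}: the paper gives no proof there and only refers to earlier work, whereas you prove it directly. You set $h(\rho)=\rho^s-\rho^2-(s-2)(\rho-1)$ with $s>2$ and note that the inflection point $(2/(s(s-1)))^{1/(s-2)}$ lies below $1$. Convexity beyond that point, together with $h(1)=h'(1)=0$, gives $h\geqslant 0$ there. Concavity before it reduces the remaining range to the endpoint values $h(0)=s-2\geqslant 0$ and the already-established nonnegativity at the inflection point. Each step checks out. Your version makes the lemma self-contained at the cost of a few lines; the paper's citation keeps the exposition short but leaves \eqref{T1} unverified within the text.
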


\begin{proof}
The proof of \eqref{T1} can be found in \cite{ZLMZ}. For \eqref{T2}, consider the function $f(\rho) = \rho^{s}-1-s (\rho-1)+(s -1)\big[ (1+s^{-1})^\frac{s}{s-1}-1 \big] - \rho$ and let $\rho_* = (1+s^{-1})^\frac{1}{s-1}$. It suffices to show $f(\rho)\geqslant 0$ for $\rho\geqslant 0$. Indeed, it is straightforward to verify that $f(\rho_*) = 0$ and $f'(\rho_*) = 0$. Moreover, since $f''(\rho) = s(s-1)\rho^{s-2}>0$ for all $\rho \geqslant 0$ and $1<s\leqslant 2$, it must be true that $f(\rho)\geqslant 0$ for $\rho\geqslant 0$.
\end{proof}

\subsubsection{Entropy-based estimates} We define the expanded entropy functional
\begin{align}\label{re}
\mathcal{E}_2(t) \triangleq \frac{1}{\gamma-1}\int_a^b \left[ u^\gamma - \alpha^\gamma - \gamma \alpha^{\gamma-1}(u-\alpha)\right](x,t) \mathrm{d}x,
\end{align}
where $\alpha$ is given in \eqref{1.4a}. After straightforward calculations, it can be shown that
\begin{align}\label{ee10a}
&\, \frac{\mathrm{d}}{\mathrm{d}t} \left(\mathcal{E}_2 + \frac12\|\tv\|_{L^2}^2 \right) + \gamma \int_a^b u^{\gamma-2}|u_x|^2\,\mathrm{d}x + \|\tv_x\|_{L^2}^2 - \frac{\gamma}{\gamma-1} \int_a^b u(u^{\gamma-1} - \alpha^{\gamma-1})(1-u)\,\mathrm{d}x \notag \\
 =&\, -\gamma(\gamma-2) |\alpha_x|^2 \int_a^b \alpha^{\gamma-3}\tu \,\mathrm{d}x + \gamma |\alpha_x|^2 \int_a^b \alpha^{\gamma-2}\,\mathrm{d}x + \beta_x\|\tv\|_{L^2}^2 - \underline{\gamma \alpha_x \int_a^b \alpha^{\gamma-2} \tu\tv\,\mathrm{d}x} \notag\\
 &\, - \gamma \alpha_x \int_a^b \alpha^{\gamma-2} \beta \tu \,\mathrm{d}x - \gamma \alpha_x\int_a^b \alpha^{\gamma-1} \tv\,\mathrm{d}x - \gamma \alpha_x\int_a^b \alpha^{\gamma-1} \beta \,\mathrm{d}x -\gamma \int_a^b\alpha^{\gamma-2}\alpha_t \tu \,\mathrm{d}x \notag\\
 &\,  + \int_a^b (2\beta\beta_x - \beta_t)\tv\,\mathrm{d}x + \big(\alpha_2^\gamma \beta_2 - \alpha_1^\gamma \beta_1\big) - (\gamma-1) \beta_x \mathcal{E}_2 - \beta_x \int_a^b \big(\alpha^\gamma+\gamma \alpha^{\gamma-1} \tu\big)\,\mathrm{d}x.
\end{align}
To control the integrals on the right-hand side, we first note that by \eqref{T1}--\eqref{T2} and boundedness of $\alpha$,
\begin{align}\label{N1}
\int_a^b |\tu|\mathrm{d}x \leqslant C\left(\mathcal{E}_2 +1\right), \quad \forall\,\gamma>1.
\end{align}
Second, to explore the dissipation mechanism induced by logistic damping, we rewrite the last term on the left-hand side of \eqref{ee10a} as the following:
\begin{align}\label{L2}
&\,- \frac{\gamma}{\gamma-1} \int_a^b u(u^{\gamma-1} - \alpha^{\gamma-1})(1-u)\,\mathrm{d}x = \frac{\gamma}{\gamma-1} \int_a^b u(u^{\gamma-1} - \alpha^{\gamma-1})\tu\,\mathrm{d}x \notag\\
&\,\qquad \qquad \qquad -\frac{\gamma}{\gamma-1} \int_a^b \big[u^\gamma-\alpha^\gamma-\gamma\alpha^{\gamma-1}\tu\big](1-\alpha)\,\mathrm{d}x - \gamma \int_a^b \alpha^{\gamma-1}(1-\alpha)\tu\,\mathrm{d}x,
\end{align}
Then, using \eqref{N1}, \eqref{L2} and Schwarz inequality, we can upgrade \eqref{ee10a} as 
\begin{align}\label{N2}
&\, \frac{\mathrm{d}}{\mathrm{d}t} \left(\mathcal{E}_2 + \frac12\|\tv\|_{L^2}^2 \right) + \gamma \int_a^b u^{\gamma-2}|u_x|^2\,\mathrm{d}x + \|\tv_x\|_{L^2}^2 + \frac{\gamma}{\gamma-1} \int_a^b u(u^{\gamma-1} - \alpha^{\gamma-1})\tu\,\mathrm{d}x \notag \\
 \leqslant &\, C\mathsf{X}(t)\left(\mathcal{E}_2 + \|\tv\|_{L^2}^2 +1\right) - \gamma \alpha_x \int_a^b \alpha^{\gamma-2} \tu\tv\,\mathrm{d}x,
\end{align}
where $\mathsf{X}(t)$ is defined in \eqref{A24}. Next, we use the domain-splitting argument in $\S$2.1.1 to regulate the nonlinearity on the right-hand side of \eqref{N2}.

For fixed $t>0$, by virtue of the mean value theorem and positivity of $u$ and $\alpha$, it holds that
\begin{align}\label{L9}
\frac{\gamma}{\gamma-1} \int_a^b u(u^{\gamma-1} - \alpha^{\gamma-1})\tu \,\mathrm{d}x \geqslant \gamma \int_{\{u>\alpha\}} u\,\xi^{\gamma-2} \tu^2 \,\mathrm{d}x,
\end{align}
where $\xi$ is between $\alpha$ and $u$. For any $\gamma > 1$, we can show that 
\begin{align}\label{L11}
\gamma \int_{\{u>\alpha\}} u\,\xi^{\gamma-2} \tu^2 \,\mathrm{d}x \geqslant \gamma \int_{\{u>\alpha\}} \alpha^{\gamma-1} \tu^2 \,\mathrm{d}x \geqslant \gamma\underline{\alpha}^{\gamma-1}\int_{\{u>\alpha\}} \tu^2 \,\mathrm{d}x,
\end{align}
where $\underline{\alpha}>0$ denotes the lower bound for $\alpha$.  Combining \eqref{L9} and \eqref{L11}, we obtain
\begin{align}\label{L12}
\frac{\gamma}{\gamma-1} \int_a^b u(u^{\gamma-1} - \alpha^{\gamma-1})\tu \,\mathrm{d}x \geqslant \gamma\underline{\alpha}^{\gamma-1}\int_{\{u>\alpha\}} \tu^2 \,\mathrm{d}x.
\end{align}
For the last term on the right-hand side of \eqref{N2}, note that 
\begin{align}\label{L14}
\Big|\gamma\alpha_x\int_a^b \alpha^{\gamma-2}\tu\tv\,\mathrm{d}x \Big| 
&\leqslant \left\{
\begin{aligned}
&\gamma |\alpha_x| \underline{\alpha}^{\gamma-2} \int_{\{u>\alpha\}} |\tu\tv| \,\mathrm{d}x + 2\gamma |\alpha_x| \int_{\{u\leqslant \alpha\}} \alpha^{\gamma-1}|\tv|\,\mathrm{d}x,\quad \gamma\leqslant 2,\\
&\gamma |\alpha_x| \overline{\alpha}^{\gamma-2} \int_{\{u>\alpha\}} |\tu\tv| \,\mathrm{d}x + 2\gamma |\alpha_x| \int_{\{u\leqslant \alpha\}} \alpha^{\gamma-1}|\tv|\,\mathrm{d}x,\quad \gamma>2,
\end{aligned}
\right.
\end{align}
where $\overline{\alpha}>0$ denotes the upper bound for $\alpha$. When $\gamma\leqslant 2$, we have 
\begin{align}\label{L15}
\gamma |\alpha_x| \underline{\alpha}^{\gamma-2} \int_{\{u>\alpha\}} |\tu\tv| \,\mathrm{d}x 
\leqslant \frac{\gamma\underline{\alpha}^{\gamma-1}}{2}\int_{\{u>\alpha\}} \tu^2 \,\mathrm{d}x + \frac{\gamma|\alpha_x|^2\underline{\alpha}^{\gamma-3}}{2}\int_{\{u>\alpha\}} \tv^2 \,\mathrm{d}x,
\end{align}
and a similar estimate holds for $\gamma>2$. Moreover, 
\begin{align}\label{L16}
2\gamma |\alpha_x| \int_{\{u\leqslant \alpha\}} \alpha^{\gamma-1}|\tv|\,\mathrm{d}x \leqslant \gamma |\alpha_x| \overline{\alpha}^{\gamma-1} \int_{\{u\leqslant \alpha\}} (|\tv|^2+1)\,\mathrm{d}x.
\end{align}
Substituting \eqref{L15}--\eqref{L16} into \eqref{L14} and using the boundedness of $\alpha_i$ yield
\begin{align}\label{L17}
\Big|\gamma\alpha_x\int_a^b \alpha^{\gamma-2}\tu\tv\,\mathrm{d}x \Big| \leqslant \frac{\gamma\underline{\alpha}^{\gamma-1}}{2}\int_{\{u>\alpha\}} \tu^2 \,\mathrm{d}x + C|\alpha_2-\alpha_1| \big(\|\tv\|_{L^2}^2+1\big).
\end{align}
The combination of \eqref{N2}, \eqref{L12} and \eqref{L17} gives 
\begin{align}\label{N3}
\frac{\mathrm{d}}{\mathrm{d}t} \left(\mathcal{E}_2 + \frac12\|\tv\|_{L^2}^2 \right) + \gamma \int_a^b u^{\gamma-2}|u_x|^2\,\mathrm{d}x + \|\tv_x\|_{L^2}^2 \leqslant C\mathsf{X}(t)\left(\mathcal{E}_2 + \|\tv\|_{L^2}^2 +1\right),
\end{align}
where we have dropped the non-negative term involving the spatial integral of $\tu^2$ from the left-hand side. Applying Gr\"onwall's inequality and \eqref{BA}, we obtain  
\begin{align}\label{N4}
\mathcal{E}_2(t) + \|\tv(t)\|_{L^2}^2 + \int_0^t \left(\int_a^b u^{\gamma-2}|u_x|^2\,\mathrm{d}x + \|\tv_x\|_{L^2}^2 \right)(\tau)\mathrm{d}\tau \leqslant C.
\end{align}
This completes the entropy-based estimates. \hfill $\square$

\subsubsection{$L^2$-estimates}

Writing \eqref{1.1a} and \eqref{1.1b} in terms of the perturbation, we have 
\begin{subequations}\label{W1}
\begin{alignat}{2}
\tu_t - [(\tu+\alpha)\tv]_x &= \tu_{xx} + (\beta \tu)_x + \alpha_x\beta + \alpha\beta_x - \alpha_t + (\tu+\alpha)(1-\alpha-\tu), \label{W1a}\\
\tv_t - [(\tu+\alpha)^\gamma]_x &= \tv_{xx} + 2\tv\tv_x + 2\tv\beta_x + 2\beta \tv_x + 2\beta\beta_x - \beta_t. \label{W1b}
\end{alignat}
\end{subequations}
Taking the $L^2$ inner product of \eqref{W1a} with $\tu$, we can show that 
\begin{align}\label{W2}
\frac{1}{2}\frac{\mathrm{d}}{\mathrm{d}t} \|\tu\|_{L^2}^2 + \|\tu_x\|_{L^2}^2 \leqslant \int_a^b (\tu+\alpha)\tv\tu_x \mathrm{d}x + C\mathsf{X}(t)\left(\|\tu\|_{L^2}^2+1\right).
\end{align}
Using Poincar\'e's inequality, we infer that 
\begin{align*}
\Big|\int_a^b \tu \tv\tu_x\dx\Big| \leqslant \frac{1}{4}\|\tu_x\|_{L^2}^2 + \|\tu\|_{L^2}^2\|\tv\|_{L^\infty}^2 \leqslant \frac{1}{4}\|\tu_x\|_{L^2}^2 + C\|\tu\|_{L^2}^2\|\tv_x\|_{L^2}^2.
\end{align*}
Similarly, by the boundedness of $\alpha$, we have 
\begin{align*}
\Big|\int_a^b \alpha \tv\tu_x\dx\Big| \leqslant \frac{1}{4}\|\tu_x\|_{L^2}^2 + C\|\tv_x\|_{L^2}^2.
\end{align*}
Then we update \eqref{W2} as 
\begin{align*}
\frac{\mathrm{d}}{\mathrm{d}t} \|\tu\|_{L^2}^2 + \|\tu_x\|_{L^2}^2 \leqslant  C\left(\mathsf{X}(t)+ \|\tv_x\|_{L^2}^2\right) \left(\|\tu\|_{L^2}^2+1\right).
\end{align*}
Applying Gr\"onwall's inequality, \eqref{BA}, \eqref{N4}, and the boundedness of $\alpha$, we get
\begin{align*}
\|\tu(t)\|_{L^2}^2 + \int_0^t \|\tu_x(\tau)\|_{L^2}^2 \mathrm{d}\tau \leqslant C.
\end{align*}
This completes the $L^2$-estimates. \hfill $\square$

\vspace{.1 in}

The remaining estimates are similar to those in $\S$2.1.3--$\S$2.1.4. The only difference between the proofs is the estimate of $\|(u^\gamma)_x\|_{L^2}^2$. Note that after \eqref{A39} is established, Sobolev and Poincar\'e's inequalities imply $\|\tu(t)\|_{L^\infty} \leqslant C$. This alongside the boundedness of $\alpha$ yields $\|u(t)\|_{L^\infty} \leqslant C$. Then it follows that $\|(u^\gamma)_x\|_{L^2}^2 \leqslant C\|u_x\|_{L^2}^2$, by which one can establish \eqref{A41}.

\section{Parabolic-Hyperbolic System under Dynamic Dirichlet B.C.}\label{PH}

This section addresses the long-time dynamics of the hyperbolic counterpart of system \eqref{0.1} under dynamic boundary conditions. We consider the following initial-boundary value problem:
\begin{subequations}\label{2.1}
\begin{alignat}{4}
u_t - (u v)_x &= u_{xx} + u(1-u), \quad &x&\in (a,b),\ t>0, \label{2.1a}\\
v_t - (u^\gamma)_x &=  0, \quad &x&\in (a,b),\ t>0; \label{2.1b}\\
(u,v)(x,0)&=(u_0,v_0)(x), \quad &x&\in [a,b], \label{2.2}\\
u(a,t)&=\alpha_1(t),\quad u(b,t)=\alpha_2(t), \quad &t& \geqslant 0. \label{2.3}
\end{alignat}
\end{subequations}
To determine an appropriate ansatz for $v$, we integrate \eqref{2.1b} with respect to $x$ and $t$, yielding:
\begin{align*}
\int_a^b v(x,t)\mathrm{d}x = \int_a^b v_0(x)\mathrm{d}x + \int_0^t \left(\alpha_2^\gamma - \alpha_1^\gamma \right)(\tau)\mathrm{d}\tau.
\end{align*}
Define $\Psi(t)$ as the dynamic spatial average of $v(x,t)$:
\begin{align}\label{2.5}
\Psi(t) \triangleq \frac{1}{b-a}\int_a^b v(x,t)\mathrm{d}x = \frac{1}{b-a}\left( \int_a^b v_0(x)\mathrm{d}x + \int_0^t \left(\alpha_2^\gamma - \alpha_1^\gamma \right)(\tau)\mathrm{d}\tau \right).
\end{align}
The fluctuation $\tv(x,t) \triangleq v(x,t) - \Psi(t)$ then satisfies the zero-mass condition for all time, and consequently, the Poincar\'e inequality applies. This indicates that $\Psi(t)$ provides the correct ansatz for the behavior of $v(x,t)$. Our main results are summarized in the following theorem.

\begin{theorem}\label{thm2}
Consider the initial-boundary value problem \eqref{2.1} with $\gamma=1$ or $\gamma\geqslant 2$. Suppose that the initial data satisfy $u_0>0$, $u_0,v_0 \in H^1((a,b))$, and are compatible with the boundary conditions. Assume that the boundary data $\alpha_1$, $\alpha_2$ are smooth functions on $\mathbb{R}_+$, satisfying 
\begin{align}
\alpha_1(t) \geqslant  \underline{\alpha_1}, \quad \alpha_2(t) \geqslant  \underline{\alpha_2}, \quad \text{and} \quad \alpha_1-1,\alpha_2-1 \in W^{1,1}(\mathbb{R}_+),\label{BA-new}
\end{align}
where $\underline{\alpha_1}$, $\underline{\alpha_2}>0$ are constants. Then there exists a unique solution to the IBVP, such that for $\forall\,t>0$,
\begin{align*}
\|(u-\alpha)(t)\|^2_{H^1}+\|(v-\Psi)(t)\|^2_{H^1} + \int_0^t \left(\|(u-\alpha)(\tau)\|^2_{H^2}+\|(v-\Psi)(\tau)\|^2_{H^1}\right)\mathrm{d}\tau \leqslant C,
\end{align*}
where $\alpha$ and $\Psi$ are defined by \eqref{1.4a} and \eqref{2.5}, respectively. Moreover, the solution satisfies
\begin{align*}
\|(u-\alpha)(t)\|_{H^1}+\|(v-\Psi)(t)\|_{H^1} \to 0\quad\text{as}\quad t\to\infty.
\end{align*}
\end{theorem}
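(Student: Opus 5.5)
The plan mirrors Section \ref{PP}. The new difficulty is that \eqref{2.1b} carries no diffusion, so $\tv=v-\Psi$ has no direct dissipation. Dissipation for $\tv$ must instead be borrowed from the $u$-equation through the coupling.

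\textbf{Step 1: entropy estimate.} From \eqref{2.1b} and \eqref{2.5}, $\tv$ satisfies
\[
\tv_t-(u^\gamma)_x=-\Psi'(t)=-\tfrac{1}{b-a}\left(\alpha_2^\gamma-\alpha_1^\gamma\right),
\]
with $\int_a^b\tv\,\mathrm{d}x=0$. Testing with $\tv$, the $\Psi'$ term integrates to zero. For $\gamma=1$, the cross term $\int_a^b u_x\tv\,\mathrm{d}x$ cancels the term $\int_a^b u_xv\,\mathrm{d}x$ in the $\mathcal{E}_1$ identity \eqref{A2}, up to the boundary remainder $\Psi(\alpha_2-\alpha_1)$. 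For $\gamma\geqslant2$, the same cancellation holds between $\mathcal{E}_2$ in \eqref{re} and $\frac12\|\tv\|_{L^2}^2$.

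All remainders $\mathsf{R}_{1,k}$ are handled exactly as in \eqref{A3}--\eqref{A13}, and the term $\alpha_x\int_a^b\alpha^{\gamma-2}\tu\tv\,\mathrm{d}x$ by the domain splitting \eqref{L14}--\eqref{L17}. The restriction $\gamma\geqslant2$ enters here: \eqref{T1} gives $\|\tu\|_{L^2}^2\lesssim\mathcal{E}_2+1$. This quadratic control is needed because terms such as $\Psi'\int_a^b\tu\,\mathrm{d}x$ and the quantity $\Psi$ are no longer small. I expect $|\Psi(t)|$ is bounded only because $\alpha_i-1\in L^1$ makes $\int_0^\infty|\alpha_2^\gamma-\alpha_1^\gamma|\,\mathrm{d}\tau<\infty$; I will verify this first. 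Gr\"onwall with $\mathsf{X}(t)$ (without the $\beta$ terms) then bounds $\mathcal{E}_i+\|\tv\|_{L^2}^2$ together with $\int_0^t\|(\sqrt u)_x\|_{L^2}^2\,\mathrm{d}\tau$ (for $\gamma=1$) or $\int_0^t\int_a^b u^{\gamma-2}u_x^2\,\mathrm{d}x\,\mathrm{d}\tau$ (for $\gamma\geqslant2$).

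\textbf{Step 2: $L^2$ estimate for $\tu$.} I repeat \eqref{A27}--\eqref{A36}. The term $\int_a^b\alpha\tv\tu_x\,\mathrm{d}x$ can now only be bounded by $\|\tv\|_{L^2}^2$, which is uniformly bounded but not yet time-integrable. So Step 2 is carried out jointly with Step 3 rather than closed alone.

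\textbf{Step 3: dissipation for $\tv$ (main obstacle).} Differentiating the $\tv$-equation gives $\tv_{xt}=(u^\gamma)_{xx}$. I use \eqref{2.1a} to eliminate second derivatives; for $\gamma=1$,
\[
\tv_{xt}=u_{xx}=u_t-(uv)_x-u(1-u).
\]
Testing with $\tv_x$ and writing $\int_a^b u_t\tv_x\,\mathrm{d}x=\frac{\mathrm{d}}{\mathrm{d}t}\int_a^b\tu\tv_x\,\mathrm{d}x-\int_a^b\tu\,\tv_{xt}\,\mathrm{d}x+\dots$ produces the damping term $\int_a^b u\,\tv_x^2\,\mathrm{d}x$ from $-u v_x$. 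The remaining terms are controlled by $\|\tu_x\|_{L^2}^2$, $\mathsf{X}(t)$, and the Step 1 dissipation. The danger is that this damping is weighted by $u$, which may degenerate.

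To handle the degeneracy, I first close a bound for $\|\tu_x\|_{L^2}$ as in \eqref{A37}--\eqref{A39}. This gives $\|u\|_{L^\infty}\leqslant C$, and then a lower bound $u\geqslant c>0$ follows from a maximum-principle or comparison argument using $\alpha_i\geqslant\underline{\alpha_i}$ and the logistic term. With $u$ bounded above and below, $\int_0^t\|\tv_x\|_{L^2}^2\,\mathrm{d}\tau\leqslant C$, and hence by Poincar\'e (mean-zero) $\int_0^t\|\tv\|_{L^2}^2\,\mathrm{d}\tau\leqslant C$. This closes Step 2. For $\gamma\geqslant2$ the same computation applies with $\tv_x$ weighted by $(\gamma u^{\gamma-1})^{-1}$, yielding damping $\int_a^b u\,\tv_x^2\,\mathrm{d}x$ plus lower-order terms.

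\textbf{Step 4: $H^1$ bounds and decay.} The $H^1$ bound on $\tv$ and the $H^2$ integrability of $\tu$ follow from the Step 3 identity and Gr\"onwall. Decay follows as in \S2.1.4: $(\|\tu\|_{H^1}^2+\|\tv\|_{H^1}^2)(t)\in W^{1,1}(\mathbb{R}_+)$, so it tends to zero. Global existence then follows from local well-posedness and continuation.
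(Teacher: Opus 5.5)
There is a genuine gap: your Steps 2 and 3 depend on each other, and nothing breaks the loop. Step 3 needs $\int_0^t\|\tu_x\|_{L^2}^2\,\mathrm{d}\tau<\infty$ to control its remainders. This includes the term $\|\tu_x\|_{L^2}^2$ produced by $\int_a^b\tu_t\tv_x\,\mathrm{d}x$, cf. \eqref{2.27}. Meanwhile Step 2 is postponed until Step 3 supplies $\int_0^t\|\tv\|_{L^2}^2\,\mathrm{d}\tau$. Your way out, ``first close a bound for $\|\tu_x\|_{L^2}$ as in \eqref{A37}--\eqref{A39}'', is circular. The right side of \eqref{A37} contains $(\alpha\tv)_x$, hence a term $2\overline{\alpha}^2\|\tv_x\|_{L^2}^2$ (see \eqref{n58}). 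In Section \ref{PP} its time integrability came from the diffusion of $v$; here it is precisely what you are trying to prove. A joint Gr\"onwall of Steps 2 and 3 does not close either, because the coupling constants (involving $\overline{\alpha}$, $\sup_t|\Psi|$ and the Poincar\'e constant) carry no smallness.

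The missing idea is an exact cancellation. For $\gamma=1$ one has $\Psi'=\alpha_x$, so $\tv_t=\tu_x$; this is \eqref{2.12}. Testing it with $2\alpha\tv$ produces $-2\int_a^b\alpha\tv\tu_x\,\mathrm{d}x$, which cancels the cross term in \eqref{2.11} up to the harmless $\int_a^b\alpha_t\tv^2\,\mathrm{d}x$. This gives \eqref{2.13}--\eqref{2.14}, in particular $\int_0^t\|\tu_x\|_{L^2}^2\,\mathrm{d}\tau\leqslant C$, before any dissipation for $\tv$ is known.

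Your treatment of the damping is also unsound. A time-uniform bound $u\geqslant c>0$ is not available from the maximum principle: the $u$-equation contains $uv_x$, and $v_x$ has no pointwise lower bound (it is at best in $L^2$). The upper bound you pair it with is the circular one above. Neither bound is needed. In \eqref{2.15} the damping comes from the \emph{linear} term $-(\alpha\tv)_x$, which gives $\|\sqrt{\alpha}\,\tv_x\|_{L^2}^2$ with $\alpha\geqslant\underline{\alpha}$. The nonlinear part $-(\tu\tv)_x$ is a perturbation of size $C\|\tu_x\|_{L^2}^2\|\tv_x\|_{L^2}^2$, handled by Gr\"onwall. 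The sign-indefinite term $-2\int_a^b\tu\tv_x\,\mathrm{d}x$ in the energy is compensated by adding $2\times$\eqref{2.29}.

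For $\gamma\geqslant2$ your sketch omits the central difficulty. The entropy dissipation $\int_a^b u^{\gamma-2}|u_x|^2\,\mathrm{d}x$ is degenerate and does not yield $\int_0^t\|\tu_x\|_{L^2}^2\,\mathrm{d}\tau$. The paper recovers it by splitting $u^{\gamma-2}=\alpha^{\gamma-2}+(u^{\gamma-2}-\alpha^{\gamma-2})$ and compensating the bad part with $L^4$ (resp.\ $L^{\gamma+1}$) estimates of $\tu$; see \eqref{ee26}--\eqref{ee65}. This step and \eqref{X4} are where $\gamma\geqslant2$ is used; the entropy step itself works for all $\gamma>1$ via \eqref{N1}. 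Dividing by $\gamma u^{\gamma-1}$, as you propose, is singular where $u$ is small. The paper instead uses the fixed weight $\alpha^{\gamma-1}$, obtaining damping $\int_a^b\alpha^{\gamma}\tv_x^2\,\mathrm{d}x$. It then uses the super-quadratic structure of the entropy, \eqref{n70}, to make the energy $\mathsf{H}$ coercive.
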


\begin{remark}
The global dynamics of \eqref{2.1} with $1<\gamma<2$ remain unexplored. Preliminary analysis suggests mathematical challenges with no current resolution. This is left for future work.
\end{remark}

Due to the lack of diffusion in $v$, the proof of Theorem \ref{thm2} differs significantly from that of Theorem \ref{thm2}. A key difference lies in establishing the time integrability for $\tv_x$. We accomplish this by deriving a damped equation for $\tv_x$. Again, we separate the proof into two subsections for $\gamma=1$ and $\gamma \geqslant 2$.

\subsection{Proof of Theorem \ref{thm2} when $\gamma=1$}

To begin, note that \eqref{A17} is still valid for the IBVP \eqref{2.1}. To compensate the quadratic nonlinearity on the left-hand side of \eqref{A17}, we take the $L^2$ inner product of \eqref{2.1b} with $v$ and then add the result to \eqref{A17} to get
\begin{align}\label{2.8}
\frac{\mathrm{d}}{\mathrm{d}t} \left(2\mathcal{E}_1 + \|v\|_{L^2}^2\right) + 4\|(\sqrt{u})_x\|_{L^2}^2 \leqslant C\mathsf{Y}(t)\left(\mathcal{E}_1 + \|v\|_{L^2}^2+1\right),
\end{align}
where we have dropped the non-negative term stemming from logistic growth and 
\begin{align}\label{Y}
\mathsf{Y}(t) \triangleq |1-\alpha_1| + |\alpha_2-\alpha_1| + |\alpha_1'| + |\alpha_2'|.
\end{align}
Then it follows from \eqref{2.8} and the boundedness of the boundary data that 
\begin{align*}
\mathcal{E}_1(t) + \|\tv(t)\|_{L^2}^2  + \int_0^t \|(\sqrt{u})_x(\tau)\|_{L^2}^2 \mathrm{d}\tau \leqslant C.
\end{align*}
This completes the entropy-based estimates.

For $L^2$-estimates, similar to \eqref{A26}, we have
\begin{align}\label{2.10}
\tu_t - (\tu\tv)_x - (\tu\Psi)_x - (\alpha \tv)_x - (\alpha\Psi)_x = \tu_{xx} + u(1-\alpha) - u\tu - \alpha_t.
\end{align}
Following the arguments in $\S$2.1.2 and noticing $\Psi_x=0$, we can show that 
\begin{align}\label{2.11}
\frac{\mathrm{d}}{\mathrm{d}t}\|\tu\|_{L^2}^2 + \|\tu_x\|_{L^2}^2 + 2\int_a^b \alpha \tv\tu_x\mathrm{d}x \leqslant C\left( \mathsf{Y}(t) + \|(\sqrt{u})_x\|_{L^2}^2\right)\left( \|\tu\|_{L^2}^2 +1\right).
\end{align}
Writing \eqref{2.1b} in terms of the perturbation, we have
\begin{align}\label{2.12}
\tv_t - \tu_x = -\Psi'(t) + \alpha_x =0.
\end{align}
Taking the $L^2$ inner product of \eqref{2.12} with $2\alpha \tv$, then adding the result to \eqref{2.11}, we obtain
\begin{align}\label{2.13}
\frac{\mathrm{d}}{\mathrm{d}t} \left(\|\tu\|_{L^2}^2 +\|\sqrt{\alpha}\,\tv\|_{L^2}^2\right) + \|\tu_x\|_{L^2}^2 \leqslant C\left( \mathsf{Y}(t) + \|(\sqrt{u})_x\|_{L^2}^2 \right)\left( \|\tu\|_{L^2}^2 + \|\tv\|_{L^2}^2 +1\right).
\end{align}
Since $\alpha$ is bounded away from zero, Gr\"onwall's inequality then yields
\begin{align}\label{2.14}
\|\tu(t)\|_{L^2}^2 +\|\tv(t)\|_{L^2}^2+ \int_0^t \|\tu_x(\tau)\|_{L^2}^2\mathrm{d}\tau \leqslant C.
\end{align}
This completes the $L^2$-estimates. Next, we establish the temporal integrability of $\|\tv_x\|_{L^2}^2$. 

Substituting $\tu_{xx} = \tv_{xt}$ into \eqref{2.10}, we obtain
\begin{align}\label{2.15}
\tu_t - (\tu\tv)_x - (\tu\Psi)_x - (\alpha \tv)_x - (\alpha\Psi)_x = \tv_{xt} + u(1-\alpha) - u\tu - \alpha_t.
\end{align}
Taking the $L^2$ inner product of \eqref{2.15} with $\tv_x$, we have
\begin{align}\label{2.16}
\frac12\frac{\mathrm{d}}{\mathrm{d}t}\|\tv_x\|_{L^2}^2 + \|\sqrt{\alpha}\,\tv_x\|_{L^2}^2 = \sum_{k=1}^{10} \mathsf{R}_{3,k}.
\end{align}
where the quantities on the right-hand side are given by
\begin{align*}
&\mathsf{R}_{3,1} \triangleq -\int_a^b \alpha_x\tv\tv_x\mathrm{d}x, \quad \mathsf{R}_{3,2} \triangleq - \int_a^b (\tu\tv)_x\tv_x\mathrm{d}x, \quad \mathsf{R}_{3,3} \triangleq - \int_a^b (\tu\Psi)_x\tv_x\mathrm{d}x, \\
&\mathsf{R}_{3,4} \triangleq - \int_a^b(\alpha\Psi)_x\tv_x\mathrm{d}x, \quad \mathsf{R}_{3,5} \triangleq - \int_a^b \tu(1-\alpha)\tv_x\mathrm{d}x, \quad 
\mathsf{R}_{3,6} \triangleq - \int_a^b \alpha(1-\alpha)\tv_x\mathrm{d}x,\\ &\mathsf{R}_{3,7} \triangleq \int_a^b \tu^2\tv_x\mathrm{d}x, \quad 
\mathsf{R}_{3,8} \triangleq \int_a^b \alpha\tu\tv_x\mathrm{d}x, \quad \mathsf{R}_{3,9} \triangleq \int_a^b \alpha_t\tv_x\mathrm{d}x, \quad \mathsf{R}_{3,10} \triangleq \int_a^b \tu_t\tv_x\mathrm{d}x.
\end{align*}
Using the boundedness of $\alpha$ and \eqref{2.14}, we obtain
\begin{align}\label{2.18}
|\mathsf{R}_{3,1}| \leqslant \frac{1}{18}\|\sqrt{\alpha}\,\tv_x\|_{L^2}^2 + C|\alpha_2-\alpha_1|.
\end{align}
Since $\tv$ satisfies Poincar\'e's inequality, we can show that 
\begin{align*}
|\mathsf{R}_{3,2}| \leqslant \frac{1}{18}\|\sqrt{\alpha}\,\tv_x\|_{L^2}^2 + C\|\tu_x\|^2\|\tv_x\|_{L^2}^2.
\end{align*}
It follows from the definition of $\Psi$ and its boundedness,
\begin{align}\label{2.20}
|\mathsf{R}_{3,3}| + |\mathsf{R}_{3,4}| \leqslant \frac{1}{9}\|\sqrt{\alpha}\,\tv_x\|_{L^2}^2 + C\|\tu_x\|_{L^2}^2.
\end{align}
The boundedness of $\alpha$ and Poincar\'e's inequality imply 
\begin{align}\label{2.22}
|\mathsf{R}_{3,5}| + |\mathsf{R}_{3,6}| \leqslant \frac{1}{9}\|\sqrt{\alpha}\,\tv_x\|_{L^2}^2 + C\|\tu_x\|_{L^2}^2.
\end{align}
Since $\|\tu\|_{L^2}$ and $\alpha$ are uniformly bounded, by Poincar\'e's inequality, 
\begin{align}\label{2.24}
|\mathsf{R}_{3,7}| + |\mathsf{R}_{3,8}| \leqslant \frac{1}{9}\|\sqrt{\alpha}\,\tv_x\|_{L^2}^2 + C\|\tu_x\|_{L^2}^2.
\end{align}
For $\mathsf{R}_{3,9}$, it is obvious that 
\begin{align}\label{2.26}
|\mathsf{R}_{3,9}| \leqslant \frac{1}{18}\|\sqrt{\alpha}\,\tv_x\|_{L^2}^2 + C\left(|\alpha_1'| + |\alpha_2'|\right).
\end{align}
Lastly, by switching time derivative, applying \eqref{2.12}, and integrating by parts, we obtain
\begin{align}\label{2.27}
\mathsf{R}_{3,10} = \frac{\mathrm{d}}{\mathrm{d}t}\left(\int_a^b \tu\tv_x\mathrm{d}x\right) + \|\tu_x\|_{L^2}^2.
\end{align}
Substituting \eqref{2.18}-\eqref{2.27} into \eqref{2.16} yields
\begin{align}\label{2.28}
\frac{\mathrm{d}}{\mathrm{d}t}\left(\|\tv_x\|_{L^2}^2 - 2\int_a^b \tu\tv_x\mathrm{d}x\right) + \|\sqrt{\alpha}\,\tv_x\|_{L^2}^2 \leqslant C\left(\mathsf{Y}(t)+\|\tu_x\|_{L^2}^2\right)\left(\|\tv_x\|_{L^2}^2 + 1\right).
\end{align}
To close the energy estimate, we note that \eqref{2.14} upgrades \eqref{2.13} as 
\begin{align}\label{2.29}
\frac{\mathrm{d}}{\mathrm{d}t} \left(\|\tu\|_{L^2}^2 +\|\sqrt{\alpha}\,\tv\|_{L^2}^2\right) + \|\tu_x\|_{L^2}^2 \leqslant C\left( \mathsf{Y}(t) + \|(\sqrt{u})_x\|_{L^2}^2 \right).
\end{align}
Multiplying \eqref{2.29} by 2, adding the result to \eqref{2.28}, and applying Gr\"onwall's inequality, we obtain
\begin{align}\label{2.32}
\|\tv_x(t)\|_{L^2}^2 + \int_0^t \|\tv_x(\tau)\|_{L^2}^2\mathrm{d}\tau \leqslant C.
\end{align}
As a consequence of \eqref{2.32}, by repeating the arguments in $\S$2.1.3, we can show that 
\begin{align*}
\|\tu_x(t)\|_{L^2}^2 + \int_0^t \|\tu_{xx}(\tau)\|_{L^2}^2\mathrm{d}\tau \leqslant C.
\end{align*}
This completes the $H^1$-estimates.

\subsection{Proof of Theorem \ref{thm2} when $\gamma\geqslant 2$} 

This subsection constitutes the most technically demanding portion of the paper, due to the absence of diffusion in $v$, the strong nonlinearity introduced by the term $(u^\gamma)_x$, and the deeply coupled nature of the unknown functions. To overcome these challenges, we refine the separation-compensation scheme established in prior work \cite{ZLMZ}, a process that relies extensively on the systematic application of elementary inequalities. First, we recall the following inequalities:

\begin{lemma}[\cite{ZLMZ}]
Let $\rho\geqslant 0$ and $s\geqslant 0$ be real numbers. Then the following inequalities hold:
\begin{align}
|\rho-1|^s &\leqslant \rho^s - 1 - s(\rho-1), \quad s \geqslant 2; \label{T3}\\
\rho^s -1 &\geqslant s(\rho-1), \hspace{.75 in} s \geqslant 1; \label{T4} \\
|\rho^s -1| &\leqslant |\rho-1|, \hspace{.85 in} s\leqslant 1. \label{T5}
\end{align}
\end{lemma}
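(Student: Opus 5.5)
The three inequalities are elementary one-variable facts, and I would prove each by reducing it to monotonicity or convexity of $\rho\mapsto\rho^s$ on $[0,\infty)$, working in the variable $\rho-1$ or $1-\rho$ and splitting at $\rho=1$. At $\rho=1$ every inequality is an equality.

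For \eqref{T4}, with $s\geqslant 1$, the map $\rho\mapsto\rho^s$ is convex on $[0,\infty)$. Its tangent line at $\rho=1$ is $1+s(\rho-1)$, and a convex function lies above its tangent lines, which gives $\rho^s-1\geqslant s(\rho-1)$ at once. For \eqref{T5}, with $0\leqslant s\leqslant 1$, I would show that $\rho^s$ lies between $1$ and $\rho$. If $\rho\geqslant 1$, then $1\leqslant\rho^s\leqslant\rho$. If $0\leqslant\rho<1$, then $\rho\leqslant\rho^s\leqslant 1$. In either case $|\rho^s-1|\leqslant|\rho-1|$.

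The only step that needs a real argument is \eqref{T3}, where $s\geqslant 2$. I would treat $\rho>1$ and $0\leqslant\rho<1$ separately.

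For $\rho>1$, write $x=\rho-1>0$ and set
\begin{align*}
h(x)=(1+x)^s-1-sx-x^s .
\end{align*}
Then $h(0)=0$ and
\begin{align*}
h'(x)=s\big[(1+x)^{s-1}-1-x^{s-1}\big].
\end{align*}
Since $q=s-1\geqslant 1$, the function $t\mapsto t^q$ is convex and vanishes at $0$, hence superadditive. This gives $(1+x)^{q}\geqslant 1+x^{q}$, so $h'\geqslant 0$ and therefore $h\geqslant 0$.

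For $0\leqslant\rho<1$, write $y=1-\rho\in(0,1]$ and set
\begin{align*}
k(y)=(1-y)^s-1+sy-y^s .
\end{align*}
Then $k(0)=0$ and
\begin{align*}
k'(y)=s\big[1-(1-y)^{s-1}-y^{s-1}\big].
\end{align*}
Both $y$ and $1-y$ lie in $[0,1]$ and $s-1\geqslant 1$, so $(1-y)^{s-1}+y^{s-1}\leqslant (1-y)+y=1$. Hence $k'\geqslant 0$, so $k\geqslant 0$ on $(0,1]$.

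Combining the two cases yields \eqref{T3}. I expect the main obstacle to be recognizing the superadditivity, respectively subadditivity-on-$[0,1]$, of $t^{s-1}$ in these two derivative computations. Both facts rely on $s\geqslant 2$, which explains the restriction in \eqref{T3}.
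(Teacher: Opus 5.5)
Your proof is correct and complete. The paper itself gives no proof of this lemma; it only cites \cite{ZLMZ}. So there is no in-paper argument to match, and your argument serves as a self-contained elementary replacement.

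\begin{itemize}
\item Your proof of \eqref{T4}, that a convex function lies above its tangent line at $\rho=1$, uses the same mechanism the paper uses for its own companion inequality \eqref{T2}. There the paper checks $f(\rho_*)=f'(\rho_*)=0$ and $f''\geqslant 0$.
\item Your proof of \eqref{T5}, that $\rho^s$ lies between $1$ and $\rho$ when $0\leqslant s\leqslant 1$, is immediate and correct.
\item For \eqref{T3}, splitting at $\rho=1$ and comparing derivatives is the right way to see the role of $s\geqslant 2$. For $\rho>1$ you use superadditivity of $t\mapsto t^{s-1}$; for $\rho<1$ you use $t^{s-1}\leqslant t$ on $[0,1]$. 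Both need $s-1\geqslant 1$.
\item The endpoint $\rho=0$ (that is, $y=1$) is correctly included, since $k$ is $C^1$ on $[0,1]$. Indeed $k(1)=s-2\geqslant 0$.
\item The restriction $s\geqslant 2$ is sharp. For $s<2$ the left side $|\rho-1|^s$ dominates the right side, which is quadratic in $\rho-1$ near $\rho=1$, so \eqref{T3} fails there. This is consistent with where your argument uses $s\geqslant 2$.
\end{itemize}
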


We begin by repeating the arguments in $\S$2.2.1 to get \eqref{N3} without the diffusion of $v$:
\begin{align}\label{X1}
\frac{\mathrm{d}}{\mathrm{d}t} \left(2\mathcal{E}_2 + \|\tv\|_{L^2}^2 \right) + 2\gamma \int_a^b u^{\gamma-2}|u_x|^2\,\mathrm{d}x \leqslant C\mathsf{Y}(t)\left(\mathcal{E}_2 + \|\tv\|_{L^2}^2 +1\right),
\end{align}
where $\mathsf{Y}(t)$ is defined in \eqref{Y}. Gr\"onwall's inequality implies
\begin{align}\label{X1a}
\mathcal{E}_2(t) + \|\tv(t)\|_{L^2}^2 + \int_0^t \int_a^b u^{\gamma-2}|u_x|^2\,\mathrm{d}x\mathrm{d}\tau \leqslant C.
\end{align}
As a consequence of \eqref{re}, \eqref{X1a} and \eqref{T3}, we obtain
\begin{align}\label{X1b}
\|\tu(t)\|_{L^\gamma}\leqslant C. 
\end{align}
With \eqref{X1a}, we can upgrade \eqref{X1} as 
\begin{align*}
\frac{\mathrm{d}}{\mathrm{d}t} \left(2\mathcal{E}_2 + \|\tv\|_{L^2}^2 \right) + 2\gamma \int_a^b u^{\gamma-2}|u_x|^2\,\mathrm{d}x \leqslant C\mathsf{Y}(t).
\end{align*}
Since $u_x=\tu_x+\alpha_x$ and $u\geqslant 0$, we know $2u^{\gamma-2}|u_x|^2 \geqslant u^{\gamma-2}|\tu_x|^2-2u^{\gamma-2}|\alpha_x|^2$. This implies
\begin{align}\label{X3}
\frac{\mathrm{d}}{\mathrm{d}t} \left(2\mathcal{E}_2 + \|\tv\|_{L^2}^2 \right) + \gamma \int_a^b u^{\gamma-2}|\tu_x|^2\,\mathrm{d}x \leqslant C\mathsf{Y}(t) + 2\gamma |\alpha_x|^2 \int_a^b u^{\gamma-2}\,\mathrm{d}x.
\end{align}
Since $\gamma\geqslant 2$ and $u\geqslant 0$, by Young's inequality, \eqref{re} and boundedness of $\alpha$, we infer that 
\begin{align}\label{X4}
\int_a^b u^{\gamma-2} \,\mathrm{d}x \leqslant \frac{\gamma-2}{\gamma}\int_a^b u^\gamma \,\mathrm{d}x + \frac{2(b-a)}{\gamma} \leqslant C\left(\mathcal{E}_2+1\right) \leqslant C,
\end{align}
where we also applied the boundedness of $\mathcal{E}_2$. By \eqref{X4}, we update \eqref{X3} as
\begin{align}\label{X5}
\frac{\mathrm{d}}{\mathrm{d}t} \left(2\mathcal{E}_2 + \|\tv\|_{L^2}^2 \right) + \gamma \int_a^b u^{\gamma-2}|\tu_x|^2\,\mathrm{d}x \leqslant C\mathsf{Y}(t).
\end{align}
which yields the upgraded version of \eqref{X1a}:
\begin{align}\label{X5a}
\mathcal{E}_2(t) + \|\tv(t)\|_{L^2}^2 + \int_0^t \int_a^b u^{\gamma-2}|\tu_x|^2\,\mathrm{d}x\mathrm{d}\tau \leqslant C.
\end{align}
Next, we recover the regular dissipation for $\tu$ from \eqref{X5}.

\subsubsection{Recovery of regular dissipation for $\tu$}

\

\vspace{.05 in}

{\bf Step 1 (separation).} We first write the dissipation term on the left-hand side of \eqref{X5} as 
\begin{align}\label{ee26}
\gamma \int_a^b u^{\gamma-2} |\tu_x|^2\,\mathrm{d}x = \gamma \int_a^b \left[(\tu+\alpha)^{\gamma-2} - \alpha^{\gamma-2}\right] |\tu_x|^2\,\mathrm{d}x + \gamma \int_a^b \alpha^{\gamma-2} |\tu_x|^2\,\mathrm{d}x.
\end{align}
\underline{When $2\leqslant \gamma \ls 3$}, the first term on the right of \eqref{ee26} satisfies, by \eqref{T5},
\begin{align*}
\big|(\tu+\alpha)^{\gamma-2}-\alpha^{\gamma-2}\big| \ls
\alpha^{\gamma-3}|\tu|\ls \frac{\alpha^{\gamma-4}\tu^2}{2}+\frac{\alpha^{\gamma-2}}{2},
\end{align*}
which implies 
\begin{align}\label{ee28}
(\tu+\alpha)^{\gamma-2}-\alpha^{\gamma-2} \gs  -\frac{\alpha^{\gamma-4}\tu^2}{2}-\frac{\alpha^{\gamma-2}}{2}.
\end{align}
Using \eqref{ee28} and the boundedness of $\alpha$, we update \eqref{ee26} as 
\begin{align}\label{ee29}
\gamma \int_a^b u^{\gamma-2} |\tu_x|^2\,\mathrm{d}x &\gs  -\frac{\gamma}{2} \underline{\alpha}^{\gamma-4} \|\tu\tu_x\|_{L^2}^2 + \frac{\gamma}{2} \underline{\alpha}^{\gamma-2}\|\tu_x\|_{L^2}^2.
\end{align}
Substituting \eqref{ee29} into \eqref{X5}, we obtain
\begin{align}\label{ee30}
\frac{\mathrm{d}}{\mathrm{d}t} \left(2\mathcal{E}_2 + \|\tv\|_{L^2}^2 \right) + \frac{\gamma}{2} \underline{\alpha}^{\gamma-2}\|\tu_x\|_{L^2}^2 - \frac{\gamma}{2} \underline{\alpha}^{\gamma-4} \|\tu\tu_x\|_{L^2}^2 \ls C\mathsf{Y}(t).
\end{align}
\underline{When $3<\gamma\ls 4$}, by \eqref{T4}, we have
\begin{align*}
(\tu+\alpha)^{\gamma-2} -\alpha^{\gamma-2} \gs   (\gamma-2)\alpha^{\gamma-3}\tu,
\end{align*}
which upgrades \eqref{X5} as 
\begin{align}\label{ee32}
\frac{\mathrm{d}}{\mathrm{d}t} \left(2\mathcal{E}_2 + \|\tv\|_{L^2}^2 \right) + \gamma \int_a^b \alpha^{\gamma-2} |\tu_x|^2\,\dx + \gamma(\gamma-2) \int_a^b \alpha^{\gamma-3}\tu|\tu_x|^2\,\mathrm{d}x 
 \ls C\mathsf{Y}(t).
\end{align}
Since $\alpha\gs  \underline{\alpha}$ and $2(\gamma-2)\alpha^{\gamma-3}\tu \gs  - \alpha^{\gamma-2} - (\gamma-2)^2 \alpha^{\gamma-4}\tu^2$, we have
\begin{align*}
\gamma(\gamma-2) \int_a^b \alpha^{\gamma-3}\tu|\tu_x|^2\,\mathrm{d}x \gs  -\frac{\gamma}{2} \int_a^b \alpha^{\gamma-2}|\tu_x|^2\,\mathrm{d}x - \frac{\gamma(\gamma-2)^2}{2} \underline{\alpha}^{\gamma-4} \|\tu\tu_x\|_{L^2}^2,
\end{align*}
which updates \eqref{ee32} as 
\begin{align}\label{ee35}
\frac{\mathrm{d}}{\mathrm{d}t} \left(2\mathcal{E}_2 + \|\tv\|_{L^2}^2 \right) + \frac{\gamma}{2}\underline{\alpha}^{\gamma-2} \|\tu_x\|_{L^2}^2 - \frac{\gamma(\gamma-2)^2}{2} \underline{\alpha}^{\gamma-4} \|\tu\tu_x\|_{L^2}^2 \ls C\mathsf{Y}(t).
\end{align}
\underline{When $\gamma>4$}, we still have \eqref{ee32}. By Young's inequality, we can show that
\begin{align}\label{ee36}
(\gamma-2) \int_a^b \alpha^{\gamma-3}\tu|\tu_x|^2\,\mathrm{d}x \gs  - \frac12\int_a^b\alpha^{\gamma-2}|\tu_x|^2\,\mathrm{d}x - \mathsf{d}_1\int_a^b |\tu|^{\gamma-1}|\tu_x|^2\,\mathrm{d}x,
\end{align}
where $\mathsf{d}_1=\frac{2^{\gamma-2}\gamma(\gamma-2)^{2\gamma-3}}{(\gamma-1)^{\gamma-1}\underline{\alpha}}$. Substituting \eqref{ee36} into \eqref{ee32}, we obtain
\begin{align}\label{ee37}
\frac{\mathrm{d}}{\mathrm{d}t} \left(2\mathcal{E}_2 + \|\tv\|_{L^2}^2 \right) + \frac{\gamma}{2} \underline{\alpha}^{\gamma-2} \|\tu_x\|_{L^2}^2 - \mathsf{d}_1\int_a^b |\tu|^{\gamma-1}|\tu_x|^2\,\mathrm{d}x \ls C\mathsf{Y}(t).
 \end{align}

\vspace{.05 in}

{\bf Step 2 (compensation).} We carry out $L^p$-based energy estimates to compensate the nonlinearity. 

\vspace{.05 in}

\noindent \underline{When $2\leqslant \gamma\ls 4$}, taking the $L^2$ inner product of \eqref{2.10} with $4\tu^3$, we obtain
\begin{align}\label{ee39}
\frac{\mathrm{d}}{\mathrm{d}t} \|\tu\|_{L^4}^4 + 12 \|\tu\tu_x\|_{L^2}^2 + 4\|\sqrt{u}\,\tu^2\|_{L^2}^2= \mathsf{R}_{4,1} + \mathsf{R}_{4,2},
\end{align}
where the two quantities on the right-hand side are defined as
\begin{align*}
\mathsf{R}_{4,1} \triangleq -12 \int_a^b \tu^2\tu_x u \tv\,\mathrm{d}x, \quad \mathsf{R}_{4,2} \triangleq 4\int_a^b \left[\Psi\alpha_x - \alpha_t + u (1-\alpha)\right]\tu^3\,\mathrm{d}x.
\end{align*}
To estimate $\mathsf{R}_{4,1}$, we note that by H\"older's inequality,
\begin{align}\label{ee40}
|\mathsf{R}_{4,1}| \ls 12\Big(\int_a^b u^{\gamma-2}|\tu_x|^2\,\mathrm{d}x \Big)^\frac12 \Big( \int_a^b u^{4-\gamma} \tu^4\tv^2\,\mathrm{d}x\Big)^\frac12.
\end{align}
Since $2\leqslant \gamma\ls 4$ and $u \gs 0$, by Young's inequality, we have
\begin{align}\label{ee42}
\int_a^b u^{4-\gamma} \tu^4\tv^2\,\mathrm{d}x \ls (4-\gamma)\int_a^b (\tu^2+\alpha^2)\tu^4\tv^2\,\mathrm{d}x + \frac{\gamma-2}{2}\int_a^b \tu^4\tv^2\,\mathrm{d}x.
\end{align}
Using the boundedness of $\alpha$ and $\|\tv\|_{L^2}$, we have
\begin{align*}
\int_a^b u^{4-\gamma} \tu^4\tv^2\,\mathrm{d}x \ls C\left(\|\tu\|_{L^\infty}^6 + \|\tu\|_{L^\infty}^4\right).
\end{align*}
Since $\tu(a,t)=0$, we infer that $|\tu(x,t)|^3 \ls 3\|\tu\|_{L^2}\|\tu\tu_x\|_{L^2} \ls C\|\tu\tu_x\|_{L^2}$, where we applied the uniform boundedness of $\|\tu\|_{L^2}$ -- a direct consequence of \eqref{X1a} and \eqref{T1}. This yields $\|\tu\|_{L^\infty}^6 \ls C\|\tu\tu_x\|_{L^2}^2$. Similarly, $\|\tu\|_{L^\infty}^4 \ls C\|\tu_x\|_{L^2}^2$. 
Using these estimates, we obtain
\begin{align}\label{ee48}
\Big( \int_a^b u^{4-\gamma} \tu^4\tv^2\,\mathrm{d}x\Big)^\frac12 \ls C
\left( \|\tu\tu_x\|_{L^2} + \|\tu_x\|_{L^2}\right).
\end{align}
Substituting \eqref{ee48} into \eqref{ee40} yields
\begin{align}\label{ee49}
|\mathsf{R}_{4,1}| \ls 11 \|\tu\tu_x\|_{L^2}^2 + C\int_a^b u^{\gamma-2}|\tu_x|^2\,\mathrm{d}x + C\Big(\int_a^b u^{\gamma-2}|\tu_x|^2\,\mathrm{d}x \Big)^\frac12 \|\tu_x\|_{L^2}.
\end{align}
By Young's inequality, $\mathsf{R}_{4,2}$ is estimated as 
\begin{align}\label{ee51}
|\mathsf{R}_{4,2}| \ls C\mathsf{Y}(t)\left(\|\tu\|_{L^4}^4 + 1\right).
\end{align}
Using \eqref{ee49} and \eqref{ee51}, we update \eqref{ee39} as
\begin{align}\label{ee52}
\frac{\mathrm{d}}{\mathrm{d}t} \|\tu\|_{L^4}^4 + \|\tu\tu_x\|_{L^2}^2 \ls C\mathsf{Y}(t) \|\tu\|_{L^4}^4 + C\left[\mathsf{Y}(t)+\mathsf{Z}(t)\right] + C\sqrt{\mathsf{Z}(t)}\, \|\tu_x\|_{L^2},
\end{align}
where $\mathsf{Z}(t)$ denotes the spatial integral of $u^{\gamma-2}|\tu_x|^2$, which is uniformly integrable in time according to \eqref{X5a}. Multiplying \eqref{ee52} by $\gamma\underline{\alpha}^{\gamma-4}$ and adding the result to \eqref{ee30} yields
\begin{align*}
&\, \frac{\mathrm{d}}{\mathrm{d}t} \left(2\mathcal{E}_2 + \|\tv\|_{L^2}^2 + \gamma \underline{\alpha}^{\gamma-4}\|\tu\|_{L^4}^4\right) + \frac{\gamma}{4} \underline{\alpha}^{\gamma-2}\|\tu_x\|_{L^2}^2\,\mathrm{d}x + \frac{\gamma}{2} \underline{\alpha}^{\gamma-4} \|\tu\tu_x\|_{L^2}^2 \notag \\
 \ls &\, C\mathsf{Y}(t) \|\tu\|_{L^4}^4 + C\left[\mathsf{Y}(t) +\mathsf{Z}(t)\right],
\end{align*}
where we also applied Young's inequality. Using Gr\"onwall's inequality, we obtain 
\begin{align}\label{ee56}
\|\tu(t)\|_{L^4}^2 + \int_0^t \big(\|\tu_x\|_{L^2}^2 + \|\tu\tu_x\|_{L^2}^2\big) \mathrm{d}\tau \ls C.
\end{align}
Similarly, we can establish this estimate when $3<\gamma\leqslant 4$ by using \eqref{ee35}.

\vspace{.05 in}

\noindent \underline{When $\gamma> 4$}, \eqref{ee42} is invalid. Instead, we take the $L^2$ inner product of \eqref{2.10} with $(\gamma+1)|\tu|^{\gamma-1}\tu$:
\begin{align}\label{ee57}
&\,\frac{\mathrm{d}}{\mathrm{d}t}\|\tu(t)\|_{L^{\gamma+1}}^{\gamma+1} + \gamma(\gamma+1)\big\||\tu|^{\frac{\gamma-1}{2}}\tu_x\big\|_{L^2}^2 + (\gamma+1)\int_a^b u |\tu|^{\gamma+1}\dx = \mathsf{R}_{5,1} + \mathsf{R}_{5,2},
\end{align}
where the two quantities on the right-hand side are defined as
\begin{align*}
\mathsf{R}_{5,1} \triangleq \gamma(\gamma+1)\int_a^b u |\tu|^{\gamma-1}\tu_x\tv \,\mathrm{d}x, \quad
\mathsf{R}_{5,2} \triangleq (\gamma+1)\int_a^b \left[\Psi\alpha_t - \alpha_t +u (1-\alpha)\right] |\tu|^{\gamma-1}\tu\,\mathrm{d}x.
\end{align*}
By H\"older's inequality and the boundedness of $\|\tv\|_{L^2}$, we can show that
\begin{align}\label{ee58}
|\mathsf{R}_{5,1}| \ls C[\mathsf{Z}(t)]^{\frac{1}{\gamma-2}} \|\tu_x\|_{L^2}^{\frac{\gamma-4}{\gamma-2}} \|\tu\|_{L^\infty}^{\gamma-1},
\end{align}
Using the uniform boundedness of $\|\tu\|_{L^\gamma}$ (see \eqref{X1b}), we infer that 
\begin{align}\label{ee59}
\|\tu\|_{L^\infty}^{\gamma-1} \ls (\gamma-1) \|\tu\|_{L^\gamma}^{\frac{\gamma}{2}} \Big(\int_a^b |\tu|^{\gamma-4} |\tu_x|^2 \,\mathrm{d}x\Big)^\frac12 \ls C \Big(\int_a^b |\tu|^{\gamma-4} |\tu_x|^2 \,\mathrm{d}x\Big)^\frac12.
\end{align}
Substituting \eqref{ee59} into \eqref{ee58} and applying Young's inequality, we obtain
\begin{align}\label{ee63}
|\mathsf{R}_{5,1}| \ls C\mathsf{Z}(t) + \delta \left(\big\||\tu|^{\frac{\gamma-1}{2}}\tu_x\big\|_{L^2}^2 + 2\|\tu_x\|_{L^2}^2 \right),
\end{align}
where $\delta>0$ is a constant. Substituting \eqref{ee63} into \eqref{ee57} and applying Young's inequality yield
\begin{align*}
\frac{\mathrm{d}}{\mathrm{d}t}\|\tu(t)\|_{L^{\gamma+1}}^{\gamma+1} + \left[\gamma(\gamma+1) -\delta\right] \big\||\tu|^{\frac{\gamma-1}{2}}\tu_x\big\|_{L^2}^2 \ls C\mathsf{Y}(t) \|\tu(t)\|_{L^{\gamma+1}}^{\gamma+1} + C\left[\mathsf{Y}(t) + \mathsf{Z}(t)\right] + 2\delta \|\tu_x\|_{L^2}^2.
\end{align*}
Combining this with \eqref{ee37} and choosing $\delta>0$ sufficiently small, we can show that
\begin{align}\label{ee65}
\|\tu(t)\|_{L^{\gamma+1}}^{\gamma+1} + \int_0^t  \left(\|\tu_x\|_{L^2}^2 + \big\||\tu|^{\frac{\gamma-1}{2}}\tu_x\big\|_{L^2}^2\right) \mathrm{d}\tau \ls C.
\end{align}
Next, we recover the dissipation for $\tv$.

\subsubsection{$H^1$-estimates}

The energy estimates in this subsection are derived in a manner similar to those in $\S$3.1, namely by obtaining a damped equation for $\tv_x$. However, the strong nonlinearity arising from the term $(u^\gamma)_x$ makes the technical details considerably more involved. For clarity, we outline only the main steps of the proof here, and relegate the detailed calculations to the Appendix. By analyzing the resulting damped equation, we can in fact show that
\begin{align}\label{C1}
\frac{\mathrm{d}}{\mathrm{d}t}\mathsf{H}(t) + \left(\underline{\alpha}^\gamma-\eta\right) \|\tv_x\|_{L^2}^2 \ls \eta \|\tu_{xx}\|_{L^2}^2 + C \mathsf{J}(t) \left(\|\tv_x\|_{L^2}^2 + \|\tu_x\|_{L^2}^2 + 1 \right),
\end{align}
where the quantities $\mathsf{H}(t)$ and $\mathsf{J}(t)$ are given by
$$
\begin{aligned}
\mathsf{H}(t) &\triangleq \frac{1}{2\gamma} \|\tv_x\|_{L^2}^2 - \int_a^b \alpha^{\gamma-1}\tu\tv_x\dx,\\[2mm]
\mathsf{J}(t) &\triangleq \left\{
\begin{aligned}
&\|\tu\tu_x\|_{L^2}^2 + \|\tu_x\|_{L^2}^2 + \mathsf{Y}(t),\quad 2\leqslant \gamma \leqslant 4,\\
&\big\||\tu|^{\frac{\gamma-1}{2}}\tu_x\big\|_{L^2}^2+ \|\tu_x\|_{L^2}^2 + \mathsf{Y}(t),\quad \gamma > 4.
\end{aligned}
\right.
\end{aligned}
$$
According to \eqref{BA-new}, \eqref{ee56} and \eqref{ee65}, the quantity $\mathsf{J}(t)$ is uniformly integrable in time. Note that the inequality \eqref{C1} is not closed for two obvious reasons: lack of control of $\|\tu_{xx}\|_{L^2}^2$ and potential non-positivity of $\mathsf{H}(t)$. We shall do a two-step repair to fix the issues. The first step is to combine \eqref{C1} with the $H^1$-estimate of $\tu$, which will wipe out $\eta\|\tu_{xx}\|_{L^2}^2$. The second step is to invoke the entropy-based estimate \eqref{X5} and utilize the super-quadratic structure of the expanded entropy functional to eliminate the possible negativity of $\mathsf{H}(t)$. 

{\bf Step 1.} Replacing $\beta(x,t)$ by $\Psi(t)$ in \eqref{A37}, we have
\begin{align*}
\frac{\mathrm{d}}{\mathrm{d}t}\|\tu_x\|_{L^2}^2 + \|\tu_{xx}\|_{L^2}^2 \leqslant \|(\tu\tv)_x + \Psi\tu_x + (\alpha\tv)_x + \Psi \alpha_x + u(1-\alpha) - u\tu - \alpha_t\|_{L^2}^2.
\end{align*}
from which one can show that  
\begin{align}\label{n58}
\frac{\mathrm{d}}{\mathrm{d}t} \|\tu_x\|_{L^2}^2 + \|\tu_{xx}\|_{L^2}^2 
\ls 2\overline{\alpha}^2\|\tv_x\|_{L^2}^2 + C\left(\mathsf{Y}(t) + \|\tu_x\|_{L^2}^2 \right)\left(\|\tv_x\|_{L^2}^2+1\right).
\end{align}
Multiplying \eqref{n58} by $\frac{\underline{\alpha}^\gamma}{4 \overline{\alpha}^2}$, adding the result to \eqref{C1}, and choosing $\eta=\min\left\{\frac{\underline{\alpha}^\gamma}{4}, \frac{\underline{\alpha}^\gamma}{8 \overline{\alpha}^2} \right\}$, we obtain
\begin{align}\label{n59}
\frac{\mathrm{d}}{\mathrm{d}t} \left(\mathsf{H}(t) + \frac{\underline{\alpha}^\gamma}{4\overline{\alpha}^2} \|\tu_x(t)\|_{L^2}^2\right) + \frac{\underline{\alpha}^\gamma}{4}\|\tv_x\|_{L^2}^2 + \frac{\underline{\alpha}^\gamma}{8 \overline{\alpha}^2} \|\tu_{xx}\|_{L^2}^2 \ls C \mathsf{J}(t) \left(\|\tv_x\|_{L^2}^2 + \|\tu_x\|_{L^2}^2 + 1 \right).
\end{align}

{\bf Step 2.} Dropping the dissipation term on the left-hand side of \eqref{X5}, we obtain  
\begin{align}\label{n64}
\frac{\mathrm{d}}{\mathrm{d}t} \left(2\mathcal{E}_2(t) + \|\tv(t)\|_{L^2}^2 \right) \ls C\mathsf{Y}(t).
\end{align}
According to \eqref{re}, the expanded entropy functional satisfies
\begin{align}\label{n65}
2\mathcal{E}_2(t) \gs \frac{\gamma \underline{\alpha}^{\gamma-2}}{\gamma-1} \|\tu(t)\|_{L^2}^2.
\end{align}
For $\chi=(\gamma-1)\overline{\alpha}^{2\gamma-2}\underline{\alpha}^{2-\gamma}$, using \eqref{n65}, it is straightforward to show that 
\begin{align}\label{n70}
2\chi\mathcal{E}_2(t) + \mathsf{H}(t) \geqslant \frac{1}{4\gamma} \|\tv_x(t)\|_{L^2}^2 + \frac{1}{4\gamma}\big\| (|\tv_x| - 2\gamma \overline{\alpha}^{\gamma-1}|\tu|)(t)\big\|_{L^2}^2.
\end{align}
Multiplying \eqref{n64} by $\chi$ and adding the result to \eqref{n59} yield
\begin{align*}
\frac{\mathrm{d}}{\mathrm{d}t} \mathsf{K}(t) + \frac{\underline{\alpha}^\gamma}{4}\|\tv_x\|_{L^2}^2 + \frac{\underline{\alpha}^\gamma}{8 \overline{\alpha}^2} \|\tu_{xx}\|_{L^2}^2 \ls C \mathsf{J}(t) \left(\mathsf{K}(t) + 1 \right).
\end{align*}
where the quantity $\mathsf{K}(t)$ is defined by 
$$
\mathsf{K}(t) \triangleq 2\chi \mathcal{E}_2(t) + \mathsf{H}(t) + \chi\|\tv(t)\|_{L^2}^2 +\frac{\underline{\alpha}^\gamma}{4\overline{\alpha}^2} \|\tu_x(t)\|_{L^2}^2.
$$
Gr\"onwall's inequality then yields
\begin{align}\label{n71}
\|\tu_x(t)\|_{L^2}^2 + \|\tv_x(t)\|_{L^2}^2 + \int_0^t \big(\|\tu_{xx}\|_{L^2}^2 +\|\tv_x\|_{L^2}^2\big)\mathrm{d}\tau \ls C,
\end{align}
where \eqref{n70} and the temporal integrability of $\mathsf{J}(t)$ have been applied. With \eqref{n71} at our disposal, the asymptotic analysis follows from $\S$2.1.4. This is the end of the proof of Theorem \ref{thm2} when $\varepsilon=0$.

\section{Conclusion}\label{C}

This study establishes the global stability of chemotaxis-logistic growth systems under dynamic Dirichlet boundary conditions, overcoming the challenge of unequal cell densities at spatial boundaries. For both parabolic-parabolic and parabolic-hyperbolic systems, we demonstrated:
\begin{itemize}
\item uniform boundedness of solutions in Sobolev norms for all time, ensuring biological realism;

\item time integrability of higher-order derivatives, reflecting damping of fluctuations; and

\item asymptotic convergence to the dynamic boundary conditions, with deviations decaying to zero.
\end{itemize}

\noindent Key technical innovations include:
\begin{itemize}
\item dynamic reference profiles that accommodates asymmetric and time-varying boundary inputs;

\item expanded entropy functionals yielding robust energy estimates under boundary forcing; and

\item domain splitting techniques to control quadratic nonlinearities from dynamic interpolation. 

\end{itemize}

\noindent Together, these provide a versatile framework for analyzing boundary-driven problems in applied fields where spatial asymmetry and temporal variation are critical, such as tissue interfaces, directed cell migration, or engineered microenvironments.

\vspace{.05 in}

Several promising research directions emerge:
\begin{itemize}
\item Generalizing boundary conditions to Neumann/Robin or mixed types.

\item Studying pattern formation in 2D/3D geometries with dynamic boundaries.

\item Incorporating stochastic fluctuations in boundary data or internal dynamics.

\item Extending analysis to multi-species chemotaxis under asymmetric boundary forcing.

\item Designing optimal boundary control protocols for therapeutic or bioengineering applications.
\end{itemize}

\noindent Bridging mathematical rigor with biological relevance, this work opens new avenues for modeling dynamically controlled chemotactic processes, paving the way for advances in developmental biology, immunology, and tissue engineering.

\section*{Acknowledgements} 

Support of this work came partially from the Fundamental Research Funds for Central Universities of China No.\,3072024WD2401 (K. Zhao); National Science Foundation through DMS 2316699 (P. Fuster Aguilera); National Science Foundation through DMS 2213363, DMS 2206491, DMS 2511403, the Simons Foundation through MP-TSM 00014320, as well as the Dolciani Halloran Foundation (V.R. Martinez)

\section*{Appendix}

This appendix contains the technical details leading to \eqref{C1}. Similar to \eqref{2.12}, we have 
\begin{align}\label{n2b}
\tv_t - (u^\gamma)_x = -\Psi'(t).
\end{align}
Differentiating \eqref{n2b} with respect to $x$ and dividing by $\gamma$, we obtain
\begin{align}\label{n7}
\gamma^{-1} \tv_{xt} - (u^{\gamma-1} - \alpha^{\gamma-1}) \tu_{xx} - (\gamma-1) u^{\gamma-2} |u_x|^2 = \alpha^{\gamma-1} \tu_{xx}.
\end{align}
Multiplying \eqref{2.10} by $\alpha^{\gamma-1}$ gives
\begin{align}\label{n8}
&\,\alpha^{\gamma-1}\tu_t - \alpha^{\gamma-1}(\tu\tv)_x - \alpha^{\gamma}\tv_x - \alpha^{\gamma-1}\alpha_x\tv - \alpha^{\gamma-1}\Psi\tu_x - \alpha^{\gamma-1}\Psi \alpha_x \notag\\
=&\, \alpha^{\gamma-1}\tu_{xx} + \alpha^{\gamma-1} u(1-\alpha) - \alpha^{\gamma-1} u \tu - \alpha^{\gamma-1}\alpha_t.
\end{align}
Combining\eqref{n7} and \eqref{n8}, we arrive at
\begin{align}\label{n9}
\gamma^{-1}\tv_{xt} + \alpha^{\gamma}\tv_x = &\, (u^{\gamma-1} - \alpha^{\gamma-1}) \tu_{xx} + (\gamma-1) u^{\gamma-2} |u_x|^2 + \alpha^{\gamma-1}\tu_t - \alpha^{\gamma-1}(\tu\tv)_x - \alpha^{\gamma-1}\alpha_x\tv \notag\\
&\, - \alpha^{\gamma-1}\Psi \tu_x - \alpha^{\gamma-1}\Psi \alpha_x - \alpha^{\gamma-1} u(1-\alpha) + \alpha^{\gamma-1} u \tu + \alpha^{\gamma-1}\alpha_t .
\end{align}
Taking the $L^2$ inner product of \eqref{n9} with $\tv_x$ and rearranging terms, we can show that
\begin{align}\label{n13}
&\,\frac{\mathrm{d}}{\mathrm{d}t} \left( \frac{1}{2\gamma}\|\tv_x\|_{L^2}^2 - \int_a^b \alpha^{\gamma-1}\tu\tv_x\dx \right) + \int_a^b \alpha^\gamma |\tv_x|^2\dx = \sum_{k=1}^{11} \mathsf{R}_{6,k},
\end{align}
where the quantities on the right-hand side are given by
\begin{align*}
\mathsf{R}_{6,1} &= \int_a^b (u^{\gamma-1} - \alpha^{\gamma-1}) \tu_{xx} \tv_x\dx, \quad \mathsf{R}_{6,2} = (\gamma-1) \int_a^b u^{\gamma-2}|u_x|^2 \tv_x\dx, \\  
\mathsf{R}_{6,3} &= -(\gamma-1)\int_a^b \alpha^{\gamma-2}\alpha_t \tu \tv_x\dx, \quad \mathsf{R}_{6,4} = (\gamma-1)\alpha_x\int_a^b \alpha^{\gamma-2} \tu (u^\gamma)_x\dx, \\
\mathsf{R}_{6,5} &= -(\gamma-1)\alpha_x \Psi' \int_a^b \alpha^{\gamma-2} \tu \dx, \quad \mathsf{R}_{6,6} = \int_a^b \alpha^{\gamma-1} \tu_x (u^\gamma)_x \dx, \quad \mathsf{R}_{6,7} = -\Psi' \int_a^b \alpha^{\gamma-1} \tu_x \dx, \\ 
\mathsf{R}_{6,8} &= - \int_a^b \alpha^{\gamma-1}(\tu\tv)_x \tv_x\dx, \quad \mathsf{R}_{6,9} = - \alpha_x \int_a^b \alpha^{\gamma-1} \tv \tv_x \dx, \quad  
\mathsf{R}_{6,10} = \Psi\int_a^b \alpha^{\gamma-1}\tu_x\tv_x\dx, \\
\mathsf{R}_{6,11} &= - \Psi \alpha_x\int_a^b \alpha^{\gamma-1}\tv_x\dx, \quad \mathsf{R}_{6,12} = \int_a^b \alpha^{\gamma-1}u(u-1)\tv_x \dx, \quad \mathsf{R}_{6,13} = \int_a^b \alpha^{\gamma-1}\alpha_t\tv_x\dx.
\end{align*}

\subsection{Estimates of $R_k$'s when $2\leqslant \gamma \leqslant 3$}
By the mean value theorem, $u^{\gamma-1} - \alpha^{\gamma-1} = (\gamma-1) w^{\gamma-2} \tu $, where $w$ is between $u$ and $\alpha$. Since $u\gs 0$ and $\alpha \gs \underline{\alpha}>0$, we must have $0\ls w \ls |\tu| + \alpha$. This, along with $2\leqslant \gamma \leqslant 3$, implies $w^{\gamma-2}\ls (|\tu| + \alpha)^{\gamma-2}\ls |\tu|^{\gamma-2} + \alpha^{\gamma-2}$. Hence, 
\begin{align}\label{n15}
\left|u^{\gamma-1} - \alpha^{\gamma-1}\right| \ls (\gamma-1) \left(|\tu|^{\gamma-2} + \alpha^{\gamma-2}\right)|\tu|.
\end{align}
Since $\|\tu\|_{L^2}$ is uniformly bounded, it holds that $\|\tu\|_{L^\infty}\lesssim \|\tu_x\|_{L^2}^\frac12$. By \eqref{n15}, we have
\begin{align*}
|\mathsf{R}_{6,1}| \ls C \Big(\|\tu_x\|_{L^2}^{\frac{\gamma-1}{2}} + \|\tu_x\|_{L^2}^\frac12\Big) \|\tu_{xx}\|_{L^2}\|\tv_x\|_{L^2}.
\end{align*}
Since $\gamma-1\in [1,2]$, by Young's inequality, 
\begin{align}\label{n17}
|\mathsf{R}_{6,1}| \ls \frac{\eta}{4} \|\tu_{xx}\|_{L^2}^2 + \frac{\eta}{4} \|\tv_x\|_{L^2}^2 + C\|\tu_x\|_{L^2}^2 \|\tv_x\|_{L^2}^2,
\end{align}
where $\eta>0$ is a constant to be determined. Using $u^{\gamma-2} \ls |\tu|^{\gamma-2} + \alpha^{\gamma-2}$, we can show that 
\begin{align}\label{n18}
|\mathsf{R}_{6,2}| \ls C\|u_x\|_{L^\infty} \left( \big\| |\tu|^{\gamma-2}u_x\big\|_{L^2} +\|u_x\|_{L^2}\right) \|\tv_x\|_{L^2}.
\end{align}
Since $\tu_x$ is mean free, it holds that $\|\tu_x\|_{L^\infty}^2 \lesssim \|\tu_x\|_{L^2} \|\tu_{xx}\|_{L^2}$. Moreover, since $2(\gamma-2)\in[0,2]$, we must have $|\tu|^{2(\gamma-2)} \leqslant |\tu|^2+1$. With such estimates, we update \eqref{n18} as 
\begin{align}\label{n19}
|\mathsf{R}_{6,2}| \ls C\Big(\|\tu_x\|_{L^2}^\frac12 \|\tu_{xx}\|_{L^2}^\frac12 + |\alpha_2-\alpha_1| \Big) \left(\|\tu\tu_x\|_{L^2} + \|\tu_x\|_{L^2} + |\alpha_2-\alpha_1| \right)\|\tv_x\|_{L^2}.
\end{align}
For the quantity on the right-hand side of \eqref{n19}, by Young's inequality, we have 
\begin{align*}
&\,C\|\tu_x\|_{L^2}^\frac12 \|\tu_{xx}\|_{L^2}^\frac12 \left(\|\tu\tu_x\|_{L^2} + \|\tu_x\|_{L^2} + |\alpha_2-\alpha_1| \right)\|\tv_x\|_{L^2} \notag\\
\ls &\, \frac{\eta}{4} \|\tu_{xx}\|_{L^2}^2 + C \left(\|\tu\tu_x\|_{L^2}^2 + \|\tu_x\|_{L^2}^2 + |\alpha_2-\alpha_1| \right) \left( \|\tv_x\|_{L^2}^2 + 1\right).
\end{align*}
This gives us an upgraded version of \eqref{n19}: 
\begin{align*}
|\mathsf{R}_{6,2}| \ls \frac{\eta}{4} \|\tu_{xx}\|_{L^2}^2 + C \left(\|\tu\tu_x\|_{L^2}^2 + \|\tu_x\|_{L^2}^2 + |\alpha_2-\alpha_1| \right) \left(\|\tv_x\|_{L^2}^2 + 1 \right).
\end{align*}
Moreover, using the uniform boundedness of $\|\tu\|_{L^2}$, we can show that
\begin{align*}
|\mathsf{R}_{6,3}| \ls C\left(|\alpha_1'| + |\alpha_2'|\right) \left(\|\tv_x\|_{L^2}^2+1\right). 
\end{align*}
Since $0\ls u^{\gamma-1} \ls 2^{\gamma-2}(|\tu|^{\gamma-1} + \alpha^{\gamma-1})$, we infer that 
\begin{align}\label{n23}
|\mathsf{R}_{6,4}| &\ls C|\alpha_2-\alpha_1| \left(\|\tu\|_{L^\gamma}^\gamma \left(\|\tu_x\|_{L^\infty} + |\alpha_x|\right) + \|\tu\|_{L^2}\|\tu_x\|_{L^2} + \|\tu\|_{L^2} |\alpha_x| \right)\notag\\
&\ls C|\alpha_2-\alpha_1| \big(\|\tu_{xx}\|_{L^2} + \|\tu_x\|_{L^2} + 1 \big),
\end{align}
where the boundedness of $\|\tu\|_{L^\gamma}$ and Poincar\'e's inequality are applied. This implies
\begin{align*}
|\mathsf{R}_{6,4}| \ls \frac{\eta}{4} \|\tu_{xx}\|_{L^2}^2 + C \left(\|\tu_x\|_{L^2}^2 + |\alpha_2-\alpha_1| \right).
\end{align*}
Using the uniform boundedness of $\Psi'(t)$ and $\|\tu\|_{L^2}$, we deduce that 
\begin{align*}
|\mathsf{R}_{6,5}| \ls C|\alpha_2-\alpha_1| \|\tu\|_{L^2} \ls C|\alpha_2-\alpha_1|.
\end{align*}
The estimate of $\mathsf{R}_{6,6}$ is similar to \eqref{n23}:
\begin{align*}
|\mathsf{R}_{6,6}| &\ls C \Big(\|\tu\|_{L^\gamma}^{\gamma-1}\|\tu_x\|_{L^\infty}^2 + \|\tu\|_{L^\gamma}^{\gamma-1} |\alpha_x| + \|\tu_x\|_{L^2}^2 + \|\tu_x\|_{L^2}|\alpha_x|\Big) \notag \\
&\ls \frac{\eta}{4}\|\tu_{xx}\|_{L^2}^2 + C\left(\|\tu_x\|_{L^2}^2 + |\alpha_2-\alpha_1|\right).
\end{align*}
By the mean value theorem and boundedness of $\alpha_i$, we can show that
\begin{align*}
|\mathsf{R}_{6,7}| \ls C\left|\alpha_1^\gamma - \alpha_2^\gamma\right| \|\tu_x\|_{L^2} \ls C\left(\|\tu_x\|_{L^2}^2 + |\alpha_1-\alpha_2|\right).
\end{align*}
Using Sobolev embedding and Poincar\'e's inequality, we infer that 
\begin{align*}
|\mathsf{R}_{6,8}| \ls C\|\tu_x\|_{L^2}\|\tv_x\|_{L^2}^2 \ls \frac{\eta}{4}\|\tv_x\|_{L^2}^2 + C\|\tu_x\|_{L^2}^2\|\tv_x\|_{L^2}^2.
\end{align*}
The estimate of $\mathsf{R}_{6,9}$ follows from Poincar\'e's inequality:
\begin{align*}
|\mathsf{R}_{6,9}| \ls C|\alpha_2-\alpha_1| \|\tv\|_{L^2}\|\tv_x\|_{L^2} \ls C|\alpha_2-\alpha_1| \|\tv_x\|_{L^2}^2.
\end{align*}
Since $\Psi$ is uniformly bounded, we have
\begin{align*}
|\mathsf{R}_{6,10}| \ls C\|\tu_x\|_{L^2} \|\tv_x\|_{L^2} \ls \frac{\eta}{4}\|\tv_x\|_{L^2}^2 + C\|\tu_x\|_{L^2}^2.
\end{align*}
Similarly, it holds that
\begin{align*}
|\mathsf{R}_{6,11}| \ls C|\alpha_2-\alpha_1| \|\tv_x\|_{L^2} \ls C|\alpha_2-\alpha_1| \left(\|\tv_x\|_{L^2}^2+1\right).
\end{align*}
Using the boundedness of $\|\tu\|_{L^2}$ and Poincar\'e's inequality, we deduce that 
\begin{align*}
|\mathsf{R}_{6,12}| \ls \frac{\eta}{4}\|\tv_x\|_{L^2}^2 + C\left( \|\tu_x\|_{L^2}^2 + |1-\alpha_1| + |\alpha_2-\alpha_1| \right).
\end{align*}
Similar as the estimate of $\mathsf{R}_{6,11}$, we have
\begin{align*}
|\mathsf{R}_{6,13}| \ls C\left(|\alpha_1'| + |\alpha_2'|\right) \left(\|\tv_x\|_{L^2}^2+1\right).
\end{align*}
Substituting the estimates of the $\mathsf{R}_{6,k}$'s into \eqref{n13} yields 
\eqref{C1}.

\subsection{Estimates of $R_k$'s when $3< \gamma\ls 4$}

We begin by noticing that \eqref{n17} is not valid when $3< \gamma\ls 4$. To resolve this issue, we re-derive an estimate for $\|\tu\|_{L^\infty}^{\gamma-1}$ as:
\begin{align*}
\|\tu\|_{L^\infty}^{\gamma-1} \ls (\gamma-1)\int_a^b |\tu|^{\gamma-2}|\tu_x|\dx \ls C\big\||\tu|^{\gamma-3}\tu_x\big\|_{L^2} \ls C\left(\|\tu\tu_x\|_{L^2}+\|\tu_x\|_{L^2}\right).
\end{align*}
Using this and \eqref{n15}, we re-estimate $\mathsf{R}_{6,1}$ as 
\begin{align}\label{n36a}
|\mathsf{R}_{6,1}| \ls \frac{\eta}{4} \|\tu_{xx}\|_{L^2}^2 + C\left( \|\tu\tu_x\|_{L^2}^2 +\|\tu_x\|_{L^2}^2 \right)\|\tv_x\|_{L^2}^2.
\end{align}
Next, using the inequality $u^{\gamma-2} \leqslant |\tu|^{\gamma-2} + \alpha^{\gamma-2}$, we infer that
\begin{align}\label{n36}
|\mathsf{R}_{6,2}| \ls C\Big(\int_a^b |\tu|^{\gamma-2} (\tu_x + \alpha_x)^2 |\tv_x|\dx + \int_a^b (\tu_x + \alpha_x)^2 |\tv_x|\dx \Big).
\end{align}
The estimate of the first integral on the right-hand side of \eqref{n36} is different from the previous case, due to the inequality $|\tu|^{2(\gamma-2)} < |\tu|^2+1$ is not valid when $3< \gamma \ls 4$. Instead, we proceed as follows:
\begin{align}\label{n37}
C\int_a^b |\tu|^{\gamma-2} (\tu_x + \alpha_x)^2 |\tv_x|\dx \ls C\|\tu\|_{L^\infty}\|\tu_x+\alpha_x\|_{L^\infty}\int_a^b |\tu|^{\gamma-3} \big(|\tu_x| + |\alpha_x|\big) |\tv_x|\dx.
\end{align}
The integral on the right-hand side is controlled, by using $|\tu|^{2(\gamma-3)} < |\tu|^2+1$, as 
\begin{align}\label{n38}
\int_a^b |\tu|^{\gamma-3} \left(|\tu_x| + |\alpha_x|\right) |\tv_x|\dx \ls C\left(\|\tu\tu_x\|_{L^2} + \|\tu_x\|_{L^2} + |\alpha_2-\alpha_1|\right)\|\tv_x\|_{L^2},
\end{align}
where the uniform boundedness of $\|\tu\|_{L^2}$ and $\alpha_i$ is applied. Moreover, since $\|\tu\|_{L^\infty}^2 \lesssim \|\tu\|_{L^2}\|\tu_x\|_{L^2}\lesssim \|\tu_x\|_{L^2}$ and $\|\tu_x\|_{L^\infty}^2 \lesssim \|\tu_x\|_{L^2} \|\tu_{xx}\|_{L^2}$, by \eqref{n38}, we update \eqref{n37} as 
\begin{align}\label{n40}
&\,C\int_a^b |\tu|^{\gamma-2} (\tu_x + \alpha_x)^2 |\tv_x|\dx \notag\\
\ls &\,C\left(\|\tu_x\|_{L^2} \|\tu_{xx}\|_{L^2}^\frac12 + \|\tu_x\|_{L^2}^\frac12 |\alpha_x|\right)\left(\|\tu\tu_x\|_{L^2} + \|\tu_x\|_{L^2} + |\alpha_2-\alpha_1|\right)\|\tv_x\|_{L^2}.
\end{align}
Using Young's inequality, we can show that 
\begin{align}\label{n41}
&\,C \|\tu_x\|_{L^2} \|\tu_{xx}\|_{L^2}^\frac12 \left(\|\tu\tu_x\|_{L^2} + \|\tu_x\|_{L^2} + |\alpha_2-\alpha_1|\right)\|\tv_x\|_{L^2}\notag\\
\ls &\,\frac{\eta}{8}\|\tu_{xx}\|_{L^2}^2 + C\left(\|\tu\tu_x\|_{L^2}^2 + \|\tu_x\|_{L^2}^2 + |\alpha_2-\alpha_1|\right) \left(\|\tv_x\|_{L^2}^2+\|\tu_x\|_{L^2}^2\right).
\end{align}
Moreover, it holds that 
\begin{align}\label{n42}
&\,C\|\tu_x\|_{L^2}^\frac12|\alpha_x| \left(\|\tu\tu_x\|_{L^2} + \|\tu_x\|_{L^2} + |\alpha_2-\alpha_1|\right)\|\tv_x\|_{L^2}\notag\\
\ls &\, C\left(\|\tu\tu_x\|_{L^2}^2 + \|\tu_x\|_{L^2}^2 + |\alpha_2-\alpha_1|\right) \left(\|\tv_x\|_{L^2}^2+1\right).
\end{align}
Substituting \eqref{n41}--\eqref{n42} into \eqref{n40} gives us 
\begin{align}\label{n43}
&\,C\int_a^b |\tu|^{\gamma-2} (\tu_x + \alpha_x)^2 |\tv_x|\dx \notag\\
\ls &\,\frac{\eta}{8}\|\tu_{xx}\|_{L^2}^2 + C\left(\|\tu\tu_x\|_{L^2}^2 + \|\tu_x\|_{L^2}^2 + |\alpha_2-\alpha_1|\right) \left(\|\tv_x\|_{L^2}^2+\|\tu_x\|_{L^2}^2+1\right).
\end{align}
The estimate of the second integral on the right-hand side of \eqref{n36} is similar as before:
\begin{align}\label{n44}
C \int_a^b (\tu_x + \alpha_x)^2 |\tv_x|\dx \ls \frac{\eta}{8}\|\tu_{xx}\|_{L^2}^2 + C \left(\|\tu_x\|_{L^2}^2 + |\alpha_2-\alpha_1|\right) \left(\|\tv_x\|_{L^2}^2+1\right).
\end{align}
Substituting \eqref{n43}--\eqref{n44} into \eqref{n36}, we arrive at 
\begin{align*}
|\mathsf{R}_{6,2}| \ls \frac{\eta}{4} \|\tu_{xx}\|_{L^2}^2 + C\left(\|\tu\tu_x\|_{L^2}^2 + \|\tu_x\|_{L^2}^2 + |\alpha_2-\alpha_1|\right) \left(\|\tv_x\|_{L^2}^2+\|\tu_x\|_{L^2}^2+1\right).
\end{align*}
The estimates of the remaining $R_k$'s are identical to those in $\S$4.2.

\subsection{Estimates of $R_k$'s when $\gamma >4$}

We first re-derive an estimate for $\|\tu\|_{L^\infty}^{\gamma-1}$:
\begin{align*}
\|\tu\|_{L^\infty}^{\gamma-1} \ls (\gamma-1)\int_a^b |\tu|^{\gamma-2}|\tu_x|\dx \ls C\big\||\tu|^{\frac{\gamma}{2}-2}\tu_x\big\|_{L^2} \ls C\left(\big\||\tu|^{\frac{\gamma-1}{2}}\tu_x\big\|_{L^2} + \|\tu_x\|_{L^2}\right),
\end{align*}
where the uniform boundedness of $\|\tu\|_{L^\gamma}$ and Young's inequality are applied. Similar to \eqref{n36a}, we have
\begin{align*}
|\mathsf{R}_{6,1}| \ls \frac{\eta}{4} \|\tu_{xx}\|_{L^2}^2 + C\left(\big\||\tu|^{\frac{\gamma-1}{2}}\tu_x\|_{L^2} +\|\tu_x\big\|_{L^2}^2 \right)\|\tv_x\|_{L^2}^2.
\end{align*}
For $\mathsf{R}_{6,2}$, first of all, since $|\tu|^{\gamma-2} < |\tu|^{\gamma-1} +1$, we revise \eqref{n36} as 
\begin{align}\label{n49}
|\mathsf{R}_{6,2}| \ls C\Big(\int_a^b |\tu|^{\gamma-1} (\tu_x + \alpha_x)^2 |\tv_x|\dx + \int_a^b (\tu_x + \alpha_x)^2 |\tv_x|\dx \Big).
\end{align}
By H\"older's inequality, the first integral on the right-hand side of \eqref{n49} satisfies
\begin{align}\label{n50}
C\int_a^b |\tu|^{\gamma-1} (\tu_x + \alpha_x)^2 |\tv_x|\dx \ls C \|\tu\|_{L^\infty}^{\frac{\gamma-1}{2}} \|\tu_x+\alpha_x\|_{L^\infty} \big\||\tu|^{\frac{\gamma-1}{2}}(\tu_x+\alpha_x)\big\|_{L^2} \|\tv_x\|_{L^2}.
\end{align}
Using the uniform boundedness of $\|\tu\|_{L^\gamma}$, we estimate the product on the right-hand side of \eqref{n50} as 
\begin{align*}
&\,C\|\tu\|_{L^\infty}^{\frac{\gamma-1}{2}} \|\tu_x+\alpha_x\|_{L^\infty} \big\||\tu|^{\frac{\gamma-1}{2}}(\tu_x+\alpha_x)\big\|_{L^2} \|\tv_x\|_{L^2}\notag\\
\ls &\, C\Big(\big\||\tu|^{\frac{\gamma-1}{2}}\tu_x\big\|_{L^2} + \|\tu_x\|_{L^2}\Big)^\frac12 \Big(\|\tu_x\|_{L^2}^\frac12\|\tu_{xx}\|_{L^2}^\frac12 + |\alpha_x|\Big) \Big(\big\||\tu|^{\frac{\gamma-1}{2}}\tu_x\big\|_{L^2} + |\alpha_x|\Big) \|\tv_x\|_{L^2}.
\end{align*}
It follows from Young's inequality that
\begin{align*}
&\,C \Big(\big\||\tu|^{\frac{\gamma-1}{2}}\tu_x\big\|_{L^2} + \|\tu_x\|_{L^2}\Big)^\frac12 \|\tu_x\|_{L^2}^\frac12\|\tu_{xx}\|_{L^2}^\frac12 \Big(\big\||\tu|^{\frac{\gamma-1}{2}}\tu_x\big\|_{L^2} + |\alpha_x|\Big) \|\tv_x\|_{L^2}\notag\\
\ls &\, \frac{\eta}{8} \|\tu_{xx}\|_{L^2}^2 + C \Big(\big\||\tu|^{\frac{\gamma-1}{2}}\tu_x\big\|_{L^2}^2 + \|\tu_x\|_{L^2}^2 + |\alpha_2-\alpha_1|\Big) \Big(\|\tv_x\|_{L^2}^2 + \|\tu_x\|_{L^2}^2\Big).
\end{align*}
Moreover, we have 
\begin{align}\label{n53}
&\,C |\alpha_x| \Big(\big\||\tu|^{\frac{\gamma-1}{2}}\tu_x\big\|_{L^2} + \|\tu_x\|_{L^2}\Big)^\frac12 \Big(\big\||\tu|^{\frac{\gamma-1}{2}}\tu_x\big\|_{L^2} + |\alpha_x|\Big) \|\tv_x\|_{L^2}\notag\\
\ls &\, C\Big(\big\||\tu|^{\frac{\gamma-1}{2}}\tu_x\big\|_{L^2}^2 + \|\tu_x\|_{L^2}^2 + |\alpha_2-\alpha_1|\Big) \left(\|\tv_x\|_{L^2}^2+1\right).
\end{align}
The combination of \eqref{n50} and \eqref{n53} gives
\begin{align*}
&\,C\int_a^b |\tu|^{\gamma-1} (\tu_x + \alpha_x)^2 |\tv_x|\dx \notag\\
\ls &\,\frac{\eta}{8} \|\tu_{xx}\|_{L^2}^2 + C \Big(\big\||\tu|^{\frac{\gamma-1}{2}}\tu_x\|_{L^2}^2 + \|\tu_x\big\|_{L^2}^2 + |\alpha_2-\alpha_1|\Big) \Big(\|\tv_x\|_{L^2}^2 + \|\tu_x\|_{L^2}^2+1\Big),
\end{align*}
which, together with \eqref{n44}, yields
\begin{align*}
|\mathsf{R}_{6,2}| \ls \frac{\eta}{4} \|\tu_{xx}\|_{L^2}^2 + C \Big(\big\||\tu|^{\frac{\gamma-1}{2}}\tu_x\big\|_{L^2}^2 + \|\tu_x\|_{L^2}^2 + |\alpha_2-\alpha_1|\Big) \left(\|\tv_x\|_{L^2}^2 + \|\tu_x\|_{L^2}^2+1\right).
\end{align*}
Again, the estimates of the remaining $R_k$'s are identical to those in $\S$4.2. 

\

\end{document}